\newtheorem{thm}{Theorem}[section]
\newtheorem{coro}[thm]{Corollary}
\newtheorem{lemma}[thm]{Lemma}
\newtheorem{prop}[thm]{Proposition}
\theoremstyle{remark}
\newtheorem{remark}[thm]{\textbf{Remark}}
\theoremstyle{definition}
\numberwithin{equation}{thm}
\newcommand{\cd}{\mathrm{cd}}
\newcommand{\Br}{\mathrm{Br}}
\newcommand{\car}{\mathrm{char}}
\newcommand{\Nrd}{\mathrm{Nrd}}
\newcommand{\Cor}{\mathrm{Cor}}
\newcommand{\CH}{\mathrm{CH}}
\newcommand{\tors}{\mathrm{tors}}
\newcommand{\cl}{\mathrm{cl}}
\newcommand{\et}{\text{\'et}}
\newcommand{\dgf}[1]{\langle\, #1\,\rangle}
\newcommand{\pfi}[1]{\langle\!\langle\, #1\,\rangle\!\rangle}
\newcommand{\Pfi}[1]{\langle\!\langle\, #1\,]\!]}
\newcommand{\newpara}{\noindent\refstepcounter{thm}{\bf(\thethm)\;}}
\newcommand{\vp}{\varphi}
\newcommand{\sK}{\mathscr{K}}
\newcommand{\coker}{\mathrm{Coker}}
\newcommand{\im}{\mathrm{Im}}
\def\ker{\mathrm{Ker}}
\newcommand{\N}{\mathbb N}
\newcommand{\Q}{\mathbb{Q}}
\newcommand{\Z}{\mathbb Z}
\newcommand{\al}{\alpha}
\newcommand{\lra}{\longrightarrow}
\newcommand{\ov}[1]{\overline{#1}}
\newcommand{\simto}{\xrightarrow{\sim}}
\newcommand{\id}{\mathrm Id}
\newcommand{\ilim}{\varinjlim}
\newcommand{\Spec}{\mathrm{Spec}}
\newcommand{\bH}{\mathbb H}
\newcommand{\bP}{\mathbb{P}}
\newcommand{\sH}{\mathscr{H}}
\newcommand{\Gal}{\mathrm{Gal}}
\begin{document}
\title{\textbf{Unramified Cohomology of Quadrics in Characteristic Two}}
\author{Yong HU and Peng SUN}
\date{}

\maketitle

\begin{abstract}
Let $F$ be a field of characteristic 2 and let $X$ be a smooth projective quadric of dimension $\ge 1$ over  $F$. We study the unramified cohomology groups with 2-primary torsion coefficients of $X$ in degrees 2 and 3. We determine completely the kernel and the cokernel of the natural map from the cohomology of $F$ to the unramified cohomology of $X$. This extends the results in characteristic different from 2 obtained by Kahn, Rost and Sujatha in the nineteen-nineties.
\end{abstract}



\noindent {\bf Key words:} Quadratic forms, quadrics, unramified cohomology, cycle class map

\medskip

\noindent {\bf MSC classification 2020:} 11E04, 14F20, 19E15

\section{Introduction}

Let $F$ be a field. Let $m$ be a positive integer not divisible by the characteristic of $F$. For each $j\ge 1$, the tensor product $\Z/m(j-1):=\mu_m^{\otimes (j-1)}$ of $m$-th roots of unity can be viewed as an \'etale sheave on $F$-schemes. Let $X$ be a proper smooth connected variety over $F$. The unramified cohomology group $H^j_{nr}\bigl(X,\,\Z/m(j-1)\bigr)$ is the group $H^0_{Zar}\bigl(X,\,\sH^j_m(j-1)\bigr)$, where $\sH^j_m(j-1)$ denotes the Zariski sheaf associated to the presheaf $U\mapsto H^j_{\et}(U,\,\Z/m(j-1))$. By taking the direct limit, we can also define
\[
H^j_{nr}\bigl(X,\,(\Q/\Z)'(j-1)\bigr):=\ilim_{\car(F)\nmid m}H^j_{nr}\bigl(X,\,\Z/m(j-1)\bigr)\,.
\]These groups can also be described in terms of residue maps in Galois cohomology, thanks to the Bloch--Ogus theorem on Gersten's conjecture (\cite{BlochOgus74}). As important birational invariants, they have found many important applications, for instance to the rationality problem (see e.g. \cite{Saltman84invent}, \cite{ColliotOjanguren89invent},
\cite{ColliotPirutka16AnnSciENS}), and have been extensively studied in the literature. With the development of the motivic cohomology theory (by Beilinson, Lichtenbaum, Suslin, Voevodsky, et al.),  even more machinery can be applied to compute unramified cohomology nowadays.

When $F$ has characteristic different from 2 and $X$ is a smooth projective quadric, the above unramified cohomology groups are computed up to degree $j\le 4$ by Kahn, Rost and Sujatha in a series of papers (\cite{Kahn95ArchMath}, \cite{KahnRostSujatha98}, \cite{KahnSujatha00JEMS}, \cite{KahnSujatha01Duke}). Some of their results are further developed and used by Izhboldin \cite{Izhboldin01} to solve a number of problems on quadratic forms, including a construction of fields of $u$-invariant 9 in characteristic $\neq 2$.

It has been noticed for decades that unramified cohomology theory can be formulated in a more general setting (see \cite{CT95}, \cite{ColliotHooblerKahn97}, \cite{Kahn04LMN1835}). In particular, when $F$ has positive characteristic $p$, the aforementioned groups have $p$-primary torsion variants. Indeed, the unramified cohomology functors $H^j_{nr}\bigl(\cdot\,,\,\Z/p^r(j-1)\bigr)$ for all $r\ge 1$ and their limit $H^j_{nr}\bigl(\cdot\,, \Q_p/\Z_p(j-1)\bigr)$ can be defined by using the Hodge--Witt cohomology (see \eqref{3p1temp} and \eqref{3p10temp} for a brief review). In contrast to the prime-to-$p$ case, there has been much less work on these $p$-primary unramified cohomology groups.

\medskip

In this paper, we are interested in the case of a smooth projective quadric $X$ over a field $F$ of characteristic 2. We investigate the unramified cohomology groups via the natural maps
\[
\eta^j_r\,:\; H^j\bigl(F,\,\Z/2^r(j-1)\bigr)\lra H^j_{nr}\bigl(X,\,\Z/2^r(j-1)\bigr)\,,\quad r\ge 1\,
\]and
\[
\eta^j_\infty\,:\; H^j\bigl(F,\,\Q_2/\Z_2(j-1)\bigr)\lra H^j_{nr}\bigl(X,\,\Q_2/\Z_2(j-1)\bigr)\,.
\]
For each $j\ge 1$, it is not difficult to see that the maps $\eta^j_r$ for different $r\ge 1$ have essentially the same behavior (Lemma\;\ref{4p3temp}). So we may focus on the two maps $\eta^j:=\eta_1^j$ and $\eta^j_{\infty}$. They are both isomorphisms if $j=1$ (Prop.\;\ref{3p4temp}) or $X$ is isotropic (Prop.\;\ref{4p1temp} (1)). In our main results, we determine completely the kernel and the cokernel of $\eta^j$ and $\eta^j_{\infty}$ for $j=2,\,3$.

\medskip

For any $c\in F$, we denote by $(c]$ its canonical image in the quotient $F/\wp(F)$ (where $\wp$ is the map $x\mapsto x^2-x$). It is the $e_1$-invariant (or Arf invariant) of the quadratic Pfister form $\langle\!\langle c]]: (x,\,y)\mapsto x^2+xy+cy^2$. A similar and perhaps more familiar notation is $(a)\in F^*/F^{*2}$, which we use to denote the canonical image of an element $a\in F^*$.

%

\medskip

The following theorem extends Kahn's results in \cite{Kahn95ArchMath} to characteristic 2.

\begin{thm}
  Let $F$ be a field of characteristic $2$ and let $X$ be the smooth projective quadric defined by a nondegenerate quadratic form $\vp$ with $\dim \vp\ge 3$. Assume that $\vp$ is anisotropic.

  \begin{enumerate}
    \item (See $\ref{5p1temp}, \ref{5p3temp}, \ref{5p6temp}$ and $\ref{5p7temp}$) Suppose $\dim\vp=3$, so that $X$ is the conic associated to a quaternion division algebra $D$, or $\dim\vp=4$ and $e_1(\vp)=0\in H^1(F,\,\Z/2)$, that is, $\vp$ is similar to the reduced norm of a quaternion division algebra $D$.

    Then
    \[
    \begin{split}
      \coker(\eta^2)&\cong \ker(\eta^2)=\{0,\,(D)\}\cong \Z/2\,,\\
      \coker(\eta^3)&\cong \ker(\eta^3)=\{(a)\cup (D)\,|\,a\in F^*\}\,,
    \end{split}
    \]where $(D)$ denotes the Brauer class of $D$.
    \item (See  $\ref{5p8temp}$ and $\ref{6p6temp}$) If $\dim\vp=4$ and $e_1(\vp)\neq 0\in H^1(F,\,\Z/2)$, then
    \[
    \begin{split}
      \coker(\eta^2)&=\ker(\eta^2)=0\,,\; \coker(\eta^3)\cong \ker(\eta^3)\,,\\
     \ker(\eta^3)&=\{0\}\cup \{(a)\cup (b)\cup (c]\,|\,\vp \text{ is similar to a subform of } \langle\!\langle a,\,b\,;\,c]]\}\,.
    \end{split}
    \]
    \item (See $\ref{6p6temp}, \ref{6p10temp}, \ref{6p11temp}$ and $\ref{8p4temp}$) Suppose $\dim\vp>4$. Then $\coker(\eta^2)=\ker(\eta^2)=0$ and

    \begin{enumerate}

      \item if $\vp$ is an Albert form (i.e. a $6$-dimensional form with trivial Arf invariant), then $\coker(\eta^3)\cong \Z/2$ and $\ker(\eta^3)=0$;

      \item if $\vp$ is a neighbor of a $3$-Pfister form $\langle\!\langle a,\,b\,;\,c]]$, then
      \[
      \coker(\eta^3)\cong\ker(\eta^3)=\{0,\,(a)\cup (b)\cup (c]\}\cong \Z/2\,.
      \]
      \item in all the other cases (e.g. $\dim\vp>8$), $\coker(\eta^3)=\ker(\eta^3)=0$.
    \end{enumerate}
  \end{enumerate}
\end{thm}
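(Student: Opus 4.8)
The plan is to reduce everything to the two maps $\eta^j:=\eta^j_1$ and $\eta^j_\infty$ by Lemma~\ref{4p3new}, and then to split each of the three cases into a \emph{kernel} part and a \emph{cokernel} part, which call for rather different methods. The structural input is Proposition~\ref{4p1temp}\,(1): $\eta^j$ is an isomorphism whenever $\vp$ is isotropic. This applies over every field in the generic splitting tower $F=F_0\subset F_1\subset\cdots\subset F_h$ of $\vp$ (with $F_{i+1}=F_i(X_i)$, $X_i$ the quadric of the anisotropic kernel of $\vp_{F_i}$), and, crucially, over $F(X)$ itself, because $X$ acquires a rational point over $F(X)$; hence $\vp_{F(X)}$ is isotropic and $H^j_{nr}(X_{F(X)})\cong H^j(F(X))$. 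Since $X$ is proper and smooth, the Bloch--Ogus resolution gives $H^j_{nr}(X)\hookrightarrow H^j(F(X))$, so that
\[
\ker(\eta^j)=\ker\bigl(H^j(F)\to H^j(F(X))\bigr)
\]
is a cohomological kernel of the function field of a quadric, whereas the cokernel is the harder object, to be read off from the residue exact sequence
\[
0\lra H^j_{nr}(X)\lra H^j(F(X))\xrightarrow{\,\partial\,}\bigoplus_{x\in X^{(1)}}H^{j-1}(F(x))\,.
\]
One should also keep in mind the coincidence $\coker(\eta^j)\cong\ker(\eta^j)$, valid in every case except the Albert one: it reflects a self-duality (up to twist) of the distinguished Rost-type direct summand of the motive of $X$, so that in those cases it is enough to compute one of the two groups and then produce a pairing.

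\medskip

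\emph{The kernel.} Here I would use the characteristic-$2$ analogue of Arason's theorem on the kernel of restriction of Galois cohomology to the function field of a quadric. In characteristic $2$ the groups $H^j(F,\Z/2(j-1))$ are Kato's cohomology groups built from logarithmic differential forms, and the required Witt-kernel statements are available through the work of Kato and of Aravire--Baeza on annihilators of function fields of quadratic forms. Feeding in the hypotheses case by case: when $\vp$ is (similar to) the norm form of a quaternion division algebra $D$ --- equivalently $X$ is the associated conic, or $\dim\vp=4$ with $e_1(\vp)=0$ --- the symbol $(D)$ dies over $F(X)$ while being nonzero over $F$, and the full kernel in degree $j$ is $(D)\cup H^{j-2}(F)$, hence $\{0,(D)\}$ in degree $2$ and $\{(a)\cup(D)\mid a\in F^*\}$ in degree $3$. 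When $\dim\vp=4$ with $e_1(\vp)\neq0$, a quaternion symbol can be killed over $F(X)$ only if $\vp$ becomes a Pfister neighbor of that quaternion's norm form, which is impossible here (that norm form has $e_1=0$), so $\ker(\eta^2)=0$; and in degree $3$ the kernel is $\{0\}$ together with the symbols $(a)\cup(b)\cup(c]$ for which $\vp$ is similar to a subform of $\Pfi{a,\,b\,;\,c}$, such a symbol splitting over $F(X)$ because $\Pfi{a,\,b\,;\,c}$ becomes isotropic, hence hyperbolic, there. Finally, when $\dim\vp>4$ the same mechanism forces $\ker(\eta^2)=0$, while $\ker(\eta^3)$ can be nonzero only when $\vp$ is a Pfister neighbor of some $\Pfi{a,\,b\,;\,c}$, giving the single symbol $(a)\cup(b)\cup(c]$; in particular $\ker(\eta^3)=0$ for Albert forms.

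\medskip

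\emph{The cokernel.} This is where the work lies; I would organise it by dimension. For $\dim\vp\le 4$, $X$ is a conic or a quadric surface, and I would compute $H^j_{nr}(X)$ directly from the Gersten/Bloch--Ogus complex --- the $K$-cohomology of such varieties, now with logarithmic de Rham--Witt coefficients --- pinning down $\coker(\eta^2)$ (an explicit class generating a copy of $\Z/2$ in the cases of part (1) of the theorem, zero otherwise) and, via the duality $\coker(\eta^3)\cong\ker(\eta^3)$ together with the kernel computation above, also $\coker(\eta^3)$. For $\dim\vp>4$ the engine is the motivic decomposition of $X$: I would use the characteristic-$2$ versions of the Chow-group and motive computations of Karpenko, Vishik and Izhboldin to isolate a Rost-motive summand in the motive of a Pfister-neighbor quadric, respectively a torsion class in $\CH^2$ of an Albert quadric, and then transport their cohomology into $H^*_{nr}(X)$ through the coniveau spectral sequence and the cycle class map. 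Concretely, for a neighbor of $\Pfi{a,\,b\,;\,c}$ the generator of $\coker(\eta^3)\cong\Z/2$ is built from the associated $3$-fold Pfister quadric and corresponds, under the self-duality of the Rost motive, to the symbol $(a)\cup(b)\cup(c]$ spanning $\ker(\eta^3)$; for an Albert form $\vp$ with associated biquaternion division algebra $A$, the torsion class in $\CH^2(X)$ maps, under the de Rham--Witt cycle class map, to the nonzero element of $\coker(\eta^3)\cong\Z/2$, which one checks is unramified and not of the form $\eta^3(\alpha)$ --- and here there is no kernel symbol to pair with, this being the single case where $\coker(\eta^j)\not\cong\ker(\eta^j)$; in all remaining cases, notably $\dim\vp>8$, the relevant motivic summands contribute nothing new and $\coker(\eta^3)=0$.

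\medskip

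\emph{The main obstacle.} The principal difficulty is that essentially none of this machinery is available off the shelf in characteristic $2$. One must first build the unramified-cohomology formalism with coefficients in the logarithmic de Rham--Witt sheaves --- the Bloch--Ogus/Gersten resolution, residue maps, transfers, the cycle class map, and the Bloch-type formula relating Chow groups to Zariski cohomology of these sheaves --- and then re-prove, in that setting, each input used above: Arason's Witt-kernel theorem, the low-degree Chow-group computations for quadrics (notably the torsion of $\CH^2$ of an Albert quadric), and the existence of a Rost-motive summand in a Pfister-neighbor quadric, in a world where inseparability and the action of Frobenius complicate both the underlying geometry and the cohomology theory itself. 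The single most delicate case is the quaternary one with $e_1(\vp)\neq0$: even in characteristic $0$ this sits at the intricate biquaternion/Albert borderline, and the clean description obtained here of $\ker(\eta^3)$ --- and, through the duality, of $\coker(\eta^3)$ --- in terms of the twisted $3$-fold Pfister forms $\Pfi{a,\,b\,;\,c}$ has to be established essentially from scratch.
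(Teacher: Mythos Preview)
Your treatment of the kernels is essentially the paper's: $\ker(\eta^j)=\ker\bigl(H^j(F)\to H^j(F(X))\bigr)$, and the explicit descriptions are quoted from \cite{AravireJacob09Contemp493} for conics and from \cite[Thm.~5.6]{HLS21} in general. Good.

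The cokernel discussion, however, diverges from the paper in two significant ways.

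First, the isomorphism $\coker(\eta^3)\cong\ker(\eta^3)$ in the non-Albert cases is \emph{not} obtained by any self-duality of a Rost-type motivic summand. The paper's mechanism is more elementary: Lemma~\ref{4p3new} gives an exact sequence
\[
0\lra \ker(\eta^3)\lra \coker(\eta^3)\lra \coker(\eta^3_\infty)\lra 0\,,
\]
and Proposition~\ref{4p4new} identifies $\coker(\eta^3_\infty)\cong\ker(\cl^2_\infty)$. So the task is to show $\cl^2_\infty$ (or already $\cl^2_X$ with $\Z/2$-coefficients) is injective in the non-Albert cases. For $\dim\vp>4$ this is done via the key identity $\coker(\eta^3)\cong\ker(\cl^2_X)$ of Theorem~\ref{Un3p25}, which comes directly out of the Bloch--Ogus and Hochschild--Serre spectral sequences (no motivic decomposition is invoked); injectivity of $\cl^2_X$ is then read off from the comparison $\xi^2:\CH^2(X)/2\to H^0(F,\CH^2(\ov{X})/2)$ together with the structure of $\CH^2(X)$ from \cite{HLS21} (Lemma~\ref{Un3p18}, Corollary~\ref{Un3p19}, Proposition~\ref{Un3p22}). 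For $\dim\vp\le 4$ the argument is a direct spectral-sequence computation (Propositions~\ref{Un3p12}, \ref{Un3p14} and Remark~\ref{5p8new}).

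Second, and more seriously, your proposed handling of the Albert case contains a genuine error. For an anisotropic Albert form one has $\CH^2(X)=\Z.h^2\oplus\Z.4\ell_2$ (see the proof of Lemma~\ref{Un3p18}(3)(a) and \cite{HLS21}); in particular $\CH^2(X)_{\tors}=0$, so there is no torsion cycle class to feed into the cycle class map. The nontrivial element of $\coker(\eta^3)$ is instead produced by an explicit construction in the Witt group: over $K=F(X)$ one writes the anisotropic kernel $\vp_1$ as $f\cdot\tau$ with $\tau$ a $2$-Pfister form, and shows that $e_3(\tau-\vp_1)\in H^3(K)$ is unramified at every discrete valuation of $K/F$ using Arason's residue maps on $I^n_q$ in characteristic $2$ (Section~\ref{sec8}, \eqref{8p2temp}). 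That this class is not in the image of $\eta^3$ is then proved by an induction on symbol length combined with Merkurjev's index reduction (Proposition~\ref{8p3temp}). The upper bound $|\coker(\eta^3)|\le 2$ does come from $\ker(\cl^2_X)\subseteq\ker(\xi^2)\cong\Z/2$, but the generator you describe does not exist.
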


The counterpart in characteristic $\neq 2$ of the next theorem appeared in \cite[Thms.\,4 and 5]{KahnRostSujatha98}.

\begin{thm}[See $\ref{4p3temp},\,\ref{6p7temp}$ and $\ref{8p5temp}$]
  Let $F$ be a field of characteristic $2$ and let $X$ be the smooth projective quadric defined by a nondegenerate quadratic form $\vp$ with $\dim \vp\ge 3$.

  Then $\ker(\eta^j_r)=\ker(\eta^j_{\infty})$ for all $r\ge 1,\,j\ge 1$, and $\coker(\eta^2_{\infty})=0$.

  If $\vp$ is an anisotropic Albert form, then  $\coker(\eta^3_{\infty})\cong \Z/2$. Otherwise $\coker(\eta^3_{\infty})=0$.
\end{thm}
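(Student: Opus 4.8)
My plan is to establish the three assertions --- the identification $\ker(\eta^j_r)=\ker(\eta^j_\infty)$, the surjectivity of $\eta^2_\infty$, and the dichotomy for $\coker(\eta^3_\infty)$ --- separately, after reducing to the case where $\vp$ is anisotropic: if $\vp$ is isotropic then $X$ is a rational variety and every $\eta^j_r$ and $\eta^j_\infty$ is an isomorphism by Proposition~\ref{4p1temp}(1), so all three assertions hold trivially; and for $j=1$ these maps are again isomorphisms by Proposition~\ref{Un3p4}, so the kernel assertion is trivial there.

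\emph{The kernel comparison.} This is Lemma~\ref{4p3new}, whose mechanism is a dévissage in the coefficients. For a fixed $j$ one feeds the short exact coefficient sequences $0\to\Z/2(j-1)\to\Z/2^{r}(j-1)\to\Z/2^{r-1}(j-1)\to0$ --- concretely, the corresponding short exact sequences of logarithmic de Rham--Witt sheaves --- over $\Spec F$ and over $X$ into a commutative diagram. On a field the sheaves involved have cohomological dimension $\le1$ and the restriction maps on logarithmic forms are surjective, so the connecting maps vanish and the natural maps $H^j(F,\Z/2(j-1))\hookrightarrow H^j(F,\Z/2^r(j-1))\hookrightarrow H^j(F,\Q_2/\Z_2(j-1))$ are injective. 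Chasing the diagram, these injections identify each $\ker(\eta^j_r)$ with $\ker(\eta^j_1)=\ker(\eta^j)$, hence with $\ker(\eta^j_\infty)=\ilim_r\ker(\eta^j_r)$; the same diagram shows that $\coker(\eta^j_r)$ vanishes whenever $\coker(\eta^j)$ does, a fact I use below.

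\emph{Surjectivity in degree $2$.} The cleanest route to $\coker(\eta^2_\infty)=0$ (see \ref{6p8new}) goes through the Brauer group. In the limit, $H^2_{nr}(X,\Q_2/\Z_2(1))$ is the $2$-primary part of the cohomological Brauer group $\Br(X)=H^2_{\et}(X,\mathbb{G}_m)$ --- as in the prime-to-$p$ situation --- and $\eta^2_\infty$ becomes the natural map $\Br(F)\{2\}\to\Br(X)\{2\}$. By Hochschild--Serre, together with $\Br(\ov{X})=0$ (true for any quadric over an algebraically closed field, being a rational projective homogeneous variety), $\coker(\Br(F)\to\Br(X))$ embeds into $H^1(F,\Pic(\ov{X}))$; and for a quadric of dimension $\ge1$ the geometric Picard group is $\Z$ with trivial Galois action, except when $\dim X=2$, where it is $\Z^2$ with the Galois group acting through the discriminant extension --- in every case an induced or trivial module, so $H^1(F,\Pic(\ov{X}))=0$ and $\coker(\eta^2_\infty)=0$. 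This fits the previous theorem: the $\Z/2$ in $\coker(\eta^2)$ arising in the conic/reduced-norm case is killed by the transition maps, hence dies in $\ilim_r$. (Alternatively one can bypass the Brauer identification altogether: by the previous theorem $\coker(\eta^2)$ is nonzero only in that case, and there its generator becomes infinitely $2$-divisible.)

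\emph{Degree $3$ and the Albert dichotomy.} Here I would run the case analysis of the previous theorem, using $\coker(\eta^3_\infty)=\ilim_r\coker(\eta^3_r)$ and the vanishing of $\coker(\eta^3_r)$ whenever $\coker(\eta^3)=0$. When $\dim\vp>4$ and $\vp$ is neither an Albert form nor a neighbor of a $3$-Pfister form, $\coker(\eta^3)=0$ (see \ref{Un3p29} and \ref{8p4temp}), hence $\coker(\eta^3_\infty)=0$. In the cases where $\coker(\eta^3)$ can be nonzero --- the conic/reduced-norm case, $\dim\vp=4$, and the $3$-Pfister-neighbor case --- the cokernel is built from symbols of the shape $(a)\cup(b)\cup(c]$ (with $(D)=(b)\cup(c]$ in the first case), and the point will be that the order-$2$ obstruction such a class contributes becomes a coboundary once the coefficients are enlarged from $\Z/2$ to $\Z/4$ --- because the relevant symbol already lies in $\im(\eta^3)$ over $F$ --- so the class maps to $0$ in $\coker(\eta^3_\infty)$. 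This leaves the anisotropic Albert case, which I expect to be the main obstacle. Here $\vp$ is the Albert form of a biquaternion division algebra $A$, $\coker(\eta^3)\cong\Z/2$ (see \ref{Un3p28}), and one must show that, in contrast to the previous cases, the transition maps $\coker(\eta^3_r)\to\coker(\eta^3_{r+1})$ are injective, so $\coker(\eta^3_\infty)\cong\Z/2$. Concretely one must produce a class of $H^3_{nr}(X,\Q_2/\Z_2(2))$ outside $\im(\eta^3_\infty)$ --- morally the unramified class forced by the drop of the index of $A$ from $4$ to $2$ over $F(X)$ --- and prove it is the only one, i.e.\ compute $H^0_{Zar}(X,\sH^3_2(2))$ and the relevant transition maps exactly. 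This is the characteristic-$2$ analogue of the Kahn--Rost--Sujatha computation for the $4$-dimensional quadric $X$, carried out via its motivic decomposition (equivalently, via the residue and transfer exact sequences tying $X$ to its subquadrics), and it rests ultimately on the characteristic-free fact that the torsion in the Chow groups of an Albert quadric is nontrivial; controlling these transition maps and that torsion simultaneously is the technical core (established in \ref{8p5temp} and the results supporting it).
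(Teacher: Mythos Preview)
Your kernel comparison and your Brauer-group route to $\coker(\eta^2_\infty)=0$ are both fine; the latter is in fact a cleaner alternative to the paper's argument. The paper establishes $\coker(\eta^2_\infty)=0$ by first computing $\ker(\eta^2)$ and $\coker(\eta^2)$ case by case (Prop.\;\ref{Un3p12}, Cor.\;\ref{Un3p17}, Thm.\;\ref{Un3p24}) and then invoking the exact sequence of Lemma~\ref{4p3new}. Your direct Hochschild--Serre argument, using $\Br(\ov X)=0$ and $H^1(F,\Pic(\ov X))=0$ (the latter because $\Pic(\ov X)$ is a permutation module in every case), bypasses the case analysis entirely.

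Where your proposal goes astray is in degree $3$. The snake-lemma d\'evissage you run for the kernels actually yields more than you extract from it: it gives the short exact sequence
\[
0\lra \ker(\eta^3)\lra \coker(\eta^3)\lra \coker(\eta^3_\infty)\lra 0
\]
(this is Lemma~\ref{4p3new}), and this is the \emph{entire} mechanism the paper uses for $\coker(\eta^3_\infty)$. Once $\ker(\eta^3)$ and $\coker(\eta^3)$ are known from the previous theorem, the sequence reads off the answer: in every non-Albert case one has $|\ker(\eta^3)|=|\coker(\eta^3)|$ (both zero, or both $\Z/2$), so $\coker(\eta^3_\infty)=0$; in the anisotropic Albert case $\ker(\eta^3)=0$ while $\coker(\eta^3)\cong\Z/2$, so $\coker(\eta^3_\infty)\cong\Z/2$. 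No analysis of transition maps, no lifting of explicit unramified classes to $\Q_2/\Z_2$-coefficients, is needed.

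By contrast, your proposed argument for the non-Albert cases with $\coker(\eta^3)\neq 0$ is not right as stated: saying ``the relevant symbol already lies in $\im(\eta^3)$ over $F$'' contradicts that symbol representing a nonzero class in $\coker(\eta^3)$. What is actually happening is that the injection $\ker(\eta^3)\hookrightarrow\coker(\eta^3)$ hits the generator, and that injection is precisely the connecting map in the snake sequence above. For the Albert case, your plan to exhibit a $\Q_2/\Z_2$-class and control all transition maps would work in principle, but it is unnecessary once you have the exact sequence; the paper's explicit unramified class $e_3(\tau-\vp_1)$ (in \eqref{8p2temp}--\ref{8p3temp}) is used only to show $\coker(\eta^3)\neq 0$ at the $\Z/2$ level. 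Finally, a small correction: it is not torsion in $\CH^2$ of the Albert quadric that drives the result (indeed $\CH^2(X)$ is torsion-free there), but rather the nontrivial kernel of $\CH^2(X)/2\to\CH^2(\ov X)/2$.
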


As in \cite{Kahn95ArchMath} and \cite{KahnRostSujatha98}, main tools in our proofs include the Bloch--Ogus and the Hochschild--Serre spectral sequences. A key difference between the $p$-primary torsion cohomology and the prime-to-$p$ case is the lack of homotopy invariance. This results in the phenomenon that our spectral sequences look different from their analogues in characteristic different from 2. Due to vanishing theorems for the local cohomology and the fact that the field $F$ has 2-cohomological dimension at most 1, these spectral sequences still have many vanishing terms.

Cycle class maps with finite or divisible coefficients are also studied and used in the paper. In this respect we need information about the structure of Chow groups in low codimension. This information can be found in Karpenko's work \cite{Karpenko90AlgGeoInv} in characteristic $\neq 2$, and recently the paper \cite{HLS21} has provided the corresponding results in characteristic 2.

We mention a situation where the distinction of characteristic affects the study of a cycle class map, and hence also makes a remarkable difference in our proofs. Suppose the quadric $X$ is defined by a neighbor of a 3-Pfister form and consider the cycle class map $\mathrm{cl}^2_X$ on the codimension two Chow group. In characteristic different from 2,  the image of the torsion element under $\mathrm{cl}^2_X$ can be described as a cup product element (\cite[Prop.\;5.4.6]{Shyevski90}). The lack of such a description forces us to proceed differently for results concerning $\coker(\eta^3)$ and $\coker(\eta^3_{\infty})$ (see the proofs of Theorem\;\ref{6p11temp} and Corollary\;\ref{8p5temp}).

\medskip

In the above two theorems the case of Albert quadrics is more subtle than the others. In that case we have to utilize more techniques from the algebraic theory of quadratic forms, especially residue maps on Witt groups of discrete valuation fields of characteristic 2 (\cite{Arason18}).

\

\noindent {\bf Notation and conventions.} For any field $k$, denote by $\ov{k}$ a separable closure of $k$.

For an algebraic variety $Y$ over $k$,  we write $Y_L=Y\times_kL$ for any field extension $L/k$, and  $\ov{Y}=Y\times_k\ov{k}$.  We say $Y$ is $k$-\emph{rational} if it is integral and birational to the projective space $\bP^{\dim Y}_k$ over $k$. We say $Y$ is \emph{geometrically rational} if $Y_L$ is $L$-rational for the algebraic closure $L$ of $k$.

Milnor $K$-groups of a field $k$ are denoted by $K^M_i(k),\,i\in\N$.

For an abelian group $M$, we denote by $M_{\tors}$ the subgroup of torsion elements in $M$. For any positive integer $n$, we define $M[n]$ and $M/n$ via the exact sequence
\[
0\lra M[n]\lra M\overset{\times n}{\lra} M\lra M/n\lra 0\,.
\]

For any scheme $X$, let $\Br(X)=H^2_{\et}(X,\,\mathbb{G}_m)$ denote its cohomological Brauer group.

In the rest of the paper,   $F$ denotes a field of characteristic 2.

\section{Quadrics and their Chow groups}

\newpara\label{2p1temp} We recall some basic definitions and facts about quadratic forms in characteristic 2. For general reference we refer to \cite{EKM08}.

We  work over a field $F$, which has characteristic 2 according to our convention.

Let $a\in F^*$. We denote by $\langle a\rangle$ the 1-dimensional quadratic form $x\mapsto ax^2$,  and let  $[1,\,a]$ or $\langle\!\langle a]]$ denote the binary quadratic form $(x,\,y)\mapsto x^2+xy+ay^2$.
A (quadratic) \emph{$1$-Pfister form} is a binary quadratic form isomorphic to $\langle\!\langle a]]=[1,\,a]$ for some $a\in F^*$. Let $\langle 1,\,a\rangle_{\mathrm{bil}}$ denote the binary bilinear form $((x_1,\,x_2), (y_1, \,y_2))\mapsto x_1y_1+ax_2y_2$. For $n\ge 2$, a quadratic form  is called an \emph{$n$-Pfister form} if it is isomorphic to
\[
\langle\!\langle a_1,\cdots, a_{n-1}\,;\,a_n]]:=\langle 1,\,a_1\rangle_{\mathrm{bil}}\otimes \cdots\otimes\langle 1,\,a_{n-1}\rangle_{\mathrm{bil}}\otimes \langle\!\langle a_n]]\,
\]for some $a_1,\cdots, a_{n-1}\in F^*$ and $a_n\in F$.

For two quadratic forms $\vp$ and $\psi$ over $F$, we say $\psi$ is a \emph{subform} of $\vp$ if $\psi\cong\vp|_{W}$ for some subspace $W$ in the vector space $V$ of $\vp$.
For $n\ge 2$, an \emph{$n$-Pfister neighbor} is a form of dimension $>2^{n-1}$ that is similar to a subform of an $n$-Pfister form.

We write $I_q(F)$ or $I_q^1(F)$ for the Witt group of even-dimensional nondegenerate quadratic forms over $F$. For $n\ge 2$, let $I^n_q(F)$ denote the subgroup of $I_q(F)$ generated by the $n$-Pfister forms. For a quadratic form $\vp$ over $F$, we will write $\vp\in I^n_q(F)$ if $\vp$ is nondegenerate, of even dimension, and its Witt class lies in $I^n_q(F)$.

We also have the Witt ring $W(F)$ of nondegenerate symmetric bilinear forms over $F$, in which the classes of even-dimensional forms form an ideal $I(F)$, called the \emph{fundamental ideal}. For each $n\ge 1$, let $I^n(F)$ be the $n$-th power of the ideal $I(F)$ and put $I^0(F)=W(F)$. The group $I_q(F)$ has a $W(F)$-module structure, and we have $I^n_{q}(F)=I^{n-1}(F)\cdot I_q(F)$ for all $n\ge 1$.

The Galois cohomology group $H^1(F,\,\Z/2)$ can be identified with $F/\wp(F)$ by Artin--Schreier theory, where $\wp$ denotes the map $x\mapsto x^2-x$. For any $b\in F$, we write $(b\,]$ for its canonical image in $F/\wp(F)=H^1(F,\,\Z/2)$. The map
\[
e_1\,:\; I^1_q(F)\lra H^1(F,\,\Z/2)\,;\quad \langle\!\langle a]]\longmapsto (a\,]
\]is a well defined homomorphism,  often called the \emph{discriminant} or \emph{Arf invariant}. It is well known that $e_1$ is surjective with $\ker(e_1)=I^2_q(F)$. A 6-dimensional form in $I^2_q(F)$ (i.e. a 6-dimensional nondegenerate form with trivial Arf invariant) is called an \emph{Albert form}.

For $n\ge 2$, by using the Kato--Milne group $H^n\bigl(F,\,\Z/2(n-1)\bigr)$ (cf. \cite{KatoII80}, \cite{Milne76AnnSciENS}), a generalization of which will be discussed in \eqref{3p1temp}, one can also define a functorial homomorphism (see \cite{Sah72JA} for $n=2$ and \cite{Kato82Invent} for general $n$)
\[
e_n\,:\;I^n_q(F)\lra H^n\bigl(F,\,\Z/2(n-1)\bigr)
\]which is surjective with $\ker(e_n)=I^{n+1}_q(F)$ such that
\[
e_n\bigl(\langle\!\langle a_1,\cdots, a_{n-1}\,;\,a_n]]\bigr)=(a_1)\cup \cdots\cup (a_{n-1})\cup (a_n\,]\,,
\]where for any $a\in F^*$, $(a)$ denotes its canonical image in $F^*/F^{*2}$. (The maps $e_2$ and $e_3$ are more classical, called the \emph{Clifford invariant} and the \emph{Arason invariant} respectively.)

Note that for all $n\ge 1$,
\begin{equation}\label{2p1p1temp}
  e_n(\vp)=e_n(c.\vp)\quad\text{for all } \vp\in I^n_q(F)\quad\text{and all}\quad c\in F^*
\end{equation}
because $\vp-c\vp=\pfi{c}\vp\in I^{n+1}_q(F)=\ker(e_n)$.

The group $H^2\bigl(F,\,\Z/2(1)\bigr)$ may be identified with the 2-torsion subgroup of the Brauer group $\Br(F)$ of $F$. For any $a\in F^*$ and $b\in F$, let $(a,\,b\,]$ be the quaternion algebra generated by two elements $i,\,j$ subject to the relations
\[
i^2=a\,,\,j^2+j=b\,,\quad ij=ji+i\,.
\]Its reduced norm is the 2-Pfister form $\langle\!\langle a;\,b]]$, and its Brauer class is $(a)\cup (b\,]$, the Clifford invariant of $\langle\!\langle a;\,b]]$. The plane conic defined by the ternary form $\langle a\rangle\bot \langle\!\langle b]]$ is called the conic associated to the quaternion algebra $(a,\,b\,]$.

For any $a,\,c\in F^*$ and $b,\,d\in F$, the form $[1,\,b+d]\bot a.[1,\,b]\bot c.[1,\,d]$ is an Albert form. Its Clifford invariant is the Brauer class of the biquaternion algebra
$(a,\,b]\otimes_F(c,\,d]$.

\

\newpara\label{2p2temp} Now we recall some known facts about Chow groups of projective quadrics (which are valid in arbitrary characteristic). More details can be found in \cite[\S\,2]{Karpenko90AlgGeoInv}, \cite[\S\,68]{EKM08} and \cite[\S\,5]{HLS21}.

Let $\vp$ be a quadratic form of dimension $\ge 3$ over $F$. Let $X$ be the projective quadric defined by $\vp$. Unless otherwise stated, we always assume $\vp$ is nondegenerate, which means $X$ is smooth as an algebraic variety over $F$.

For each $i\in\N$, let $\CH^i(X)$ denote the Chow group of codimension $i$ cycles of $X$. Let $h\in \CH^1(X)$ be the class of a hyperplane section. Using the intersection pairing as multiplication in the Chow ring (\cite[\S\,57]{EKM08}), we get elements $h^i\in \CH^i(X)$ for each $i$. Set $d=\dim X$. For every integer $j\in [0,\,d/2]$, let $\ell_j\in\CH^{d-j}(\ov{X})$ be the class of a $j$-dimensional linear subspace contained in $\ov{X}$. Then, for each $0\le i\le d$, we have
		\[
		\CH^i(\ov{X})=
		\begin{cases}
			\Z.h^i\quad & \text{ if }\;\; 0\le i<\frac{d}{2}\,,\\
			\Z.\ell_{d-i} & \text{ if }\;\; \frac{d}{2}< i\le d\,,\\
			\Z.h^i\oplus \Z.\ell_i & \text{ if }\;\; i=\frac{d}{2}\,.
		\end{cases}
		\]
If $\dim X=2m$ is even, there are exactly two different classes of $m$-dimensional linear subspaces $\ell_m,\,\ell_m'$ in $\CH^m(\ov{X})$ and the sum of these two classes is equal to $h^m$. Moreover, $\CH^m(\ov{X})$ is a trivial Galois module if and only if $e_1(\vp)=0$ (\cite[Lemma\;8.2]{Kahn99MotivicCellular}). When $e_1(\vp)\neq 0$, the Galois action permutes the two classes $\ell_m$ and $\ell'_m$.

\section{Unramified cohomology in positive characteristic}\label{sec3}

Throughout this section, we fix a positive integer $r$ and a field $k$ of characteristic $p>0$.

\medskip

\newpara\label{3p1temp}  For each $i\in\N$, let $\nu_r(i)=W_r\Omega^i_{\log}$ be the $i$-th logarithmic Hodge--Witt sheaf on the big \'etale site of $k$ (\cite{Illusie79}, \cite{Shiho07}). Define $\Z/p^r(i):=\nu_r(i)[-i]$, as an object in the derived category of \'etale sheaves. This object can also be viewed as an \'etale motivic complex (\cite{GeisserLevine00InventMath}).  Without giving details of the constructions, let us mention that our whole paper relies on the fact that $\Z/p^r(i)$ is the correct analogue of the more frequently used \'etale sheaf $\Z/m(i)$ for $m$ prime to $p$, from both the \'etale cohomological and the $K$-theoretic points of view. In particular, the Bloch--Kato--Gabber theorem (\cite{BlochKato86}) plays a crucial role in the subsequent discussions.

For every integer $b$, we have  the cohomology functor on $k$-schemes
\[
H^b\bigl(\cdot\,,\,\Z/p^r(i)\bigr):=H^b_{\et}\bigl(\cdot\,,\,\Z/p^r(i)\bigr)=H^{b-i}_{\et}\bigl(\cdot\,,\,\nu_r(i)\bigr)\,.
\]For shorthand, we sometimes write $H^b_{p^r}(\cdot,\,i)$ instead of the precise notation $H^b\bigl(\cdot\,,\,\Z/p^r(i)\bigr)$.
The Zariski sheaf associated to the presheaf $U\mapsto H^b\bigl(U,\,\Z/p^r(i)\bigr)$ is denoted by $\sH^b_{p^r}(i)$.

For a smooth connected $k$-variety $X$, we define the \emph{unramified cohomology group}
\[
H^b_{nr}\bigl(X\,,\,\Z/p^r(i)\bigr):=H^0_{Zar}\bigl(X\,,\,\sH^b_{p^r}(i)\bigr)\,.
\]This group can also be described by using the Cousin complex of $X$. It is naturally a subgroup of $H^b\bigl(k(X),\,\Z/p^r(i)\bigr)$.

\

In the theorem below, we collect some well known results that are needed in this paper. It is worth noticing that \eqref{3p2p2temp} is a special phenomenon in characteristic $p$.
(It fails dramatically in characteristic $\neq p$.)

\begin{thm}\label{3p2temp}
	Let $X$ be a smooth connected $k$-variety.
	
	\begin{enumerate}
		\item We have the \emph{Bloch--Ogus spectral sequence}
		\begin{equation}\label{3p2p1temp}
			E_2^{a,\,b}=H^a_{Zar}\big(X\,,\,\sH^b_{p^r}(i)\big)\Longrightarrow E^{a+b}=H^{a+b}\big(X\,,\,\Z/p^r(i)\big)
		\end{equation}
		with
		\begin{equation}\label{3p2p2temp}
			E_2^{a,\,b}=0\quad \text{ if }\; b\notin\{i,\,i+1\}\,, \text{ or  if }\; a>b=i
		\end{equation}
		and
		\begin{equation}\label{3p2p3temp}
			E_2^{i,\,i}=H^i_{Zar}\big(X\,,\,\sH^i_{p^r}(i)\big)\cong \CH^i(X)/p^r\;.
		\end{equation}
		\item There are natural isomorphisms
		\begin{equation}\label{3p2p4temp}
			\begin{split}
				H^i\big(X\,,\,\Z/p^r(i)\big)&\simto H^0_{Zar}\big(X\,,\,\sH^i_{p^r}(i)\big)=H^i_{nr}\big(X\,,\,\Z/p^r(i)\big)\,,\\
				H^{2i+j}\big(X\,,\,\Z/p^r(i)\big)&\simto H^{j+i-1}_{Zar}\big(X\,,\,\sH^{i+1}_{p^r}(i)\big)\quad \text{ for }\; j\ge 1\,,
			\end{split}
		\end{equation}
		and
		\begin{equation}\label{3p2p5temp}
			H^2_{nr}(X,\,\Z/p^r(1))=H^0_{Zar}\big(X,\,\sH^2_{p^r}(1)\big)\cong \Br(X)[p^r]\;.
		\end{equation}
		\item For smooth proper connected $k$-varieties, the group $H^b_{nr}\big(X\,,\,\Z/p^r(i)\big)$ is a $k$-birational invariant.
		\item Let $\pi: X\to Y$ be a proper morphism between smooth connected $k$-varieties whose generic fiber is $k(Y)$-rational.
		
		Then the natural map $\pi^*: H^b_{nr}\big(Y,\,\Z/p^r(i)\big)\to H^b_{nr}\big(X\,,\,\Z/p^r(i)\big)$ is an isomorphism.
	\end{enumerate}
\end{thm}
\begin{proof}
	 (1) These are standard consequences of Gersten's conjecture for smooth varieties (\cite[Cor.\;5.1.11 and \S\,7.4\,(3)]{ColliotHooblerKahn97}, \cite[Thm.\;4.1]{Shiho07}) and the Bloch--Kato--Gabber theorem (\cite[\S\,2]{BlochKato86}). The nontrivial part of \eqref{3p2p2temp} is proved in \cite[(3.5.3)]{Gros85} and \cite[Cor.\;3.4]{Shiho07}.

	(2) The isomorphisms in \eqref{3p2p4temp} follow easily from the vanishing results in \eqref{3p2p2temp}. The isomorphism \eqref{3p2p5temp} is a consequence of purity for Brauer groups (\cite[Thm.\;1.2]{Cesnavicius19Duke}).
	
	(3) See \cite[Thm.\;8.5.1]{ColliotHooblerKahn97}.
	
	(4) See \cite[Thm.\;8.6.1]{ColliotHooblerKahn97}.
\end{proof}

\newpara\label{3p3temp} Let $X$ be a smooth connected $k$-variety. For every  $j\ge 1$, we have a
natural restriction map
\begin{equation}\label{3p3p1temp}
	\eta^{j}_r\;:\; H^{j}\bigl(k,\,\Z/p^r(j-1)\bigr)\lra H^j_{nr}\bigl(X,\,\Z/p^r(j-1)\bigr)\,.
\end{equation}
Let $i\in\N$ be such that $i\le j$. We have the cup product map
\begin{equation}\label{3p3p2temp}
	H^{j-i}_{nr}\bigl(X\,,\,\Z/p^r(j-i)\bigr)\otimes H^i_{Zar}\bigl(X,\,\sH^i_{p^r}(i)\bigr)\overset{\cup}{\lra} H^i_{Zar}\bigl(X\,,\,\sH^j_{p^r}(j)\bigr)
\end{equation}and a natural map
\begin{equation}\label{3p3p3temp}
	H^{j-i}\bigl(k\,,\,\Z/p^r(j-i)\bigr)\otimes \CH^i(X)/p^r\lra H^{j-i}_{nr}\bigl(X\,,\,\Z/p^r(j-i)\bigr)\otimes H^i_{Zar}\bigl(X,\,\sH^i_{p^r}(i)\bigr)
\end{equation}
induced from the identification $\CH^i(X)/p^r=H^i_{Zar}\big(X,\,\sH^i_{p^r}(i)\big)$ in \eqref{3p2p3temp} and the natural map
\[
H^{j-i}\bigl(k,\,\Z/p^r(j-i)\bigr)\lra H^0_{Zar}\bigl(X,\,\sH^{j-i}_{p^r}(j-i)\bigr)=H^{j-i}_{nr}\bigl(X\,,\,\Z/p^r(j-i)\bigr)\;.
\]
Composing \eqref{3p3p2temp} with \eqref{3p3p3temp} yields a natural map
\begin{equation}\label{3p3p4temp}
	\mu^{i,\,j}_r\;:\; H^{j-i}\bigl(k\,,\,\Z/p^r(j-i)\bigr)\otimes \CH^i(X)/p^r\lra H^i_{Zar}\big(X\,,\,\sH^j_{p^r}(j)\big)\;.
\end{equation}
For $i=0$ this coincides with the natural map
\[
\tilde{\eta}^j_r\,:\;H^j\bigl(k\,,\,\Z/p^r(j)\bigr)\to H^j_{nr}\bigl(X,\,\Z/p^r(j)\bigr)=H^j\bigl(X,\,\Z/p^r(j)\bigr)\,.
\]

\begin{prop}\label{3p4temp}
	With notation as in $\eqref{3p3temp}$, suppose that $X$ is proper and geometrically rational.

	Then the natural map $\eta^1_r\;:\; H^1(k,\,\Z/p^r)\to H^1_{nr}\big(X,\,\Z/p^r\big)$ is an isomorphism.
\end{prop}
\begin{proof}
	By \eqref{3p2p4temp}, we have $H^1_{nr}(X,\,\Z/p^r)=H^1(X,\,\Z/p^r)$. The result is a well known consequence of the homotopy exact sequence for \'etale fundamental groups (\cite[(IX.6.1)]{SGA1}).
\end{proof}

\begin{prop}\label{3p5temp}
	Let $X$ be a smooth, proper, $k$-rational variety.
	
	Then for every $j\ge 1$, the map
\[
\eta^j_r\;:\; H^j\bigl(k,\,\Z/p^r(j-1)\bigr)\lra H^j_{nr}\big(X,\,\Z/p^r(j-1)\big)
\] is an isomorphism.
\end{prop}
\begin{proof}
	This is a special case of Theorem\;\ref{3p2temp} (4).
\end{proof}

\newpara\label{3p6temp} Let $X$ be a smooth connected $k$-variety. The  Bloch--Ogus spectral sequence \eqref{3p2p1temp} is concentrated in two horizontal lines by \eqref{3p2p2temp} (which relies particularly heavily on the characteristic $p$ assumption). So we can obtain natural homomorphisms
\[
e^a(i)\,:\; E_2^{a,\,i}=H^a_{Zar}\bigl(X\,,\,\sH^i_{p^r}(i)\bigr)\lra E^{i+a}=H^{i+a}\bigl(X,\,\Z/p^r(i)\bigr)\;,\quad 1\le a\le i\,
\]which fit into a long exact sequence
\begin{equation}\label{3p6p1temp}
  \begin{split}
    0\lra &\;E_2^{1,\,i}\xrightarrow[]{e^1(i)} E^{i+1}\lra E_2^{0,\,i+1}=H^0_{Zar}\big(X,\,\sH^{i+1}_{p^r}(i)\big)\\
    \overset{d}{\lra}&\;E_2^{2,\,i}\xrightarrow[]{e^2(i)} E^{i+2}\lra E_2^{1,\,i+1}=H^1_{Zar}\big(X\,,\,\sH^{i+1}_{p^r}(i)\big)\to\cdots\\
      \cdots&\cdots\cdots\cdots\cdots\cdots\\
      \overset{d}{\lra}&\;E_2^{a,\,i}\xrightarrow[]{e^a(i)} E^{i+a}\lra E_2^{a-1,\,i+1}=H^{a-1}_{Zar}\big(X\,,\,\sH^{i+1}_{p^r}(i)\big)\to\cdots\\
      \cdots&\cdots\cdots\cdots\cdots\cdots\\
    \overset{d}{\lra}&\;E_2^{i,\,i}\xrightarrow[]{e^i(i)} E^{2i}\lra E_2^{i-1,\,i+1}=H^{i-1}_{Zar}\big(X\,,\,\sH^{i+1}_{p^r}(i)\big)\lra 0\;.
  \end{split}
\end{equation}
In particular, we have a natural map
\begin{equation}\label{3p6p2temp}
	\cl^i_X=\cl^i:=e^i(i)\;:\; E_2^{i,\,i}=\CH^i(X)/p^r\lra E^{2i}=H^{2i}(X\,,\,\Z/p^r(i))
\end{equation}
called the \emph{cycle class map}.

Let $j\in\N$ be another integer with $j\ge i$. By composing $e^i(j)$ with the map $\mu^{i,\,j}_r$ in \eqref{3p3p4temp} we get a natural map
\begin{equation}\label{3p6p3temp}
	\nu^{i,\,j}_r\;:\; H^{j-i}\bigl(k,\,\Z/p^r(j-i)\bigr)\otimes \CH^i(X)/p^r\lra H^{i+j}\bigl(X\,,\,\Z/p^r(j)\bigr)\,.
\end{equation}Compatibility of the Bloch--Ogus spectral sequence with cup products (cf.\,\cite[p.868]{KahnRostSujatha98}) gives the commutative diagram
{
	\begin{equation}\label{3p6p4temp}
		\xymatrix{
			&  H^{j-i}\bigl(k,\,\Z/p^r(j-i)\bigr)\otimes\CH^i(X)/p^r \ar[ddl]_{\tilde{\eta}_r^{j-i}\otimes \id} \ar[dddddl]^{\mu_r^{i,\,j}} \ar[dddddr]_{\nu_r^{i,\,j}} \ar[ddr]^{\tilde{\eta}_r^{j-i}\otimes \cl^i_X} &\\
			&&&\\
			S \ar[ddd]_{\cup}\ar@{.>}[rr]^{\id\otimes e^i(i)} && T  \ar[ddd]_{\cup}\\
			&&&\\
			&&&\\
			H^i_{Zar}\bigl(X\,,\,\sH^j_{p^r}(j)\bigr) \ar[rr]^{e^i(j)} && H^{i+j}\bigl(X\,,\,\Z/p^r(j)\bigr)
		}
	\end{equation}
}
where
\[
\begin{split}
  S&=H^{j-i}_{nr}\bigl(X\,,\,\Z/p^r(j-i)\big)\otimes H^i_{Zar}\bigl(X\,,\,\sH^i_{p^r}(i)\bigr)\,,\\
T&=H^{j-i}_{nr}\bigl(X\,,\,\Z/p^r(j-i)\bigr)\otimes H^{2i}\bigl(X,\,\Z/p^r(i)\bigr) \,.
\end{split}
\]

\begin{prop}\label{3p7temp}
	Let $X$ be a smooth connected $k$-variety. Then there is an exact sequence
	\[
	0\lra \CH^1(X)/p^r\overset{\cl^1_X}{\lra}H^2_{\et}(X,\,1)\lra \Br(X)[p^r]\lra 0\,.
	\]
	In particular, the cycle class map $\cl^1_X$ is injective, and if $X$ is proper and $k$-rational, we have $\coker(\cl^1_X)\cong \Br(k)[p^r]$.
\end{prop}
\begin{proof}
	Note that $H^2_{Zar}\big(X,\,\sH^1(1)\big)=0$ by \eqref{3p2p2temp}. Thus, taking $i=1$ in \eqref{3p6p1temp} and noticing the isomorphism \eqref{3p2p5temp} yields the result.
\end{proof}

\begin{prop}\label{3p8temp}
	Let $X$ be a smooth proper $k$-rational variety. Then there is an exact sequence
	\begin{equation}\label{3p8p1temp}
		\begin{split}
			0&\lra H^1_{Zar}\bigl(X\,,\,\sH^2_{p^r}(2)\bigr)\lra H^3\bigl(X,\,\Z/p^r(2)\bigr)\lra    H^3\bigl(k,\,\Z/p^r(2)\bigr)\\
			&\lra \CH^2(X)/p^r\overset{\cl^2_X}{\lra}H^4\bigl(X,\,\Z/p^r(2)\bigr)\lra H^1_{Zar}\big(X,\,\sH^3_{p^r}(2)\big)\lra 0\,.
		\end{split}
	\end{equation}
	In particular, if $H^3\bigl(k,\,\Z/p^r(2)\bigr)=0$ (e.g. if $k$ is separably closed), then the cycle class map
\[
\cl^2_X: \CH^2(X)/p^r\lra H^{4}\big(X,\,\Z/p^r(2)\big)
\] is injective.
\end{prop}
\begin{proof}
	Taking $i=2$ in \eqref{3p6p1temp} we obtain an exact sequence
	\begin{equation}\label{3p8p2temp}
		\begin{split}
			0&\lra H^1_{Zar}\bigl(X\,,\,\sH^2_{p^r}(2)\bigr)\lra H^3\bigl(X,\,\Z/p^r(2)\bigr)\lra       H^3_{nr}\bigl(X,\,\Z/p^r(2)\bigr)\\
			&\lra \CH^2(X)/p^r\overset{\cl^2}{\lra}H^4\bigl(X,\,\Z/p^r(2)\bigr)\lra H^1_{Zar}\big(X,\,\sH^3_{p^r}(2)\big)\lra 0\,.
		\end{split}
	\end{equation}
	The desired exact sequence follows from \eqref{3p8p2temp}, because for the (smooth proper) $k$-rational variety $X$ we have  $H^3_{nr}\bigl(X,\,\Z/p^r(2)\bigr)\cong H^3\bigl(k,\,\Z/p^r(2)\bigr)$ (Prop.\;\ref{3p5temp}).
\end{proof}

\newpara\label{3p9temp} Let $X$ be a smooth connected $k$-variety. For each $i\in\N$ we have the Hochschild--Serre spectral sequence
\begin{equation}\label{3p9p1temp}
	H^a\bigl(k\,,\,H^b(\ov{X}\,,\,\Z/p^r(i))\bigr)\Longrightarrow H^{a+b}\bigl(X\,,\,\Z/p^r(i)\bigr)\;.
\end{equation}
Since $\cd_p(k)\le 1$, we have	$H^a\bigl(k\,,\,H^b(\ov{X}\,,\,\Z/p^r(i))\bigr)=0$ for all $a>1$. (Here again, there is a  significant difference with the prime-to-$p$ cohomology theory.) Thus, the spectral sequence \eqref{3p9p1temp} yields an isomorphism
\begin{equation}\label{3p9p2temp}
	H^0\bigl(k\,,\,H^i(\ov{X}\,,\,\Z/p^r(i))\bigr) \cong H^i\bigl(X\,,\,\Z/p^r(i)\bigr)
\end{equation}
and an exact sequence
\begin{equation}\label{3p9p3temp}
\begin{split}
  	0\lra H^1\bigl(k\,,\,H^j(\ov{X},\,\Z/p^r(i))\bigr)&\lra H^{j+1}\bigl(X,\,\Z/p^r(i)\bigr)\\
  &\lra H^0\bigl(k\,,\,H^{j+1}(\ov{X}\,,\,\Z/p^r(i))\bigr)\lra 0\;
\end{split}
\end{equation}for each $j\ge i$.

Now assume $X$ is proper and geometrically rational (e.g. $X$ is a smooth projective quadric of dimension $\ge 1$). Then by \eqref{3p2p4temp} and Thm.\;\ref{3p2temp} (4), we have
\begin{equation}\label{3p9p4temp}
	H^i\bigl(\ov{X}\,,\,\Z/p^r(i)\bigr)\cong H^i_{nr}\bigl(\ov{X}\,,\,\Z/p^r(i)\bigr)=H^i\bigl(\ov{k}\,,\,\Z/p^r(i)\bigr)=K_i^M(\ov{k})/p^r\;.
\end{equation}
Here the identification to the Milnor $K$-theory in the last equality of \eqref{3p9p4temp} is given by the Bloch--Kato--Gabber theorem (\cite[Cor.\;2.8]{BlochKato86}).
It is proved in \cite[Cor.\;6.5]{Izhboldin91} that
\begin{equation}\label{3p9p5temp}
\begin{split}
	H^i\bigl(k,\,\Z/p^r(i)\bigr)&=K_i^M(k)/p^r=H^0\bigl(k\,,\,K^M_i(\ov{k})/p^r\bigr)\;,\\
 H^{i+1}\bigl(k,\,\Z/p^r(i)\bigr)&=H^1\bigl(k\,,\,K^M_i(\ov{k})/p^r\bigr)\;.
\end{split}
\end{equation}
Hence, combining \eqref{3p9p4temp}, \eqref{3p9p5temp} and the case $j=i$ of \eqref{3p9p3temp}, we obtain an exact sequence
\begin{equation}\label{3p9p6temp}
	0\lra H^{i+1}_{p^r}(k,\,i)\lra H^{i+1}\bigl(X,\,\Z/p^r(i)\bigr)\lra H^0\bigl(k\,,\,H^{i+1}(\ov{X}\,,\,\Z/p^r(i))\bigr)\lra 0\;.
\end{equation}
On the other hand, since
\[
H^0_{Zar}\bigl(\ov{X}\,,\,\sH^{i+1}_{p^r}(i)\bigr)=H^{i+1}_{nr}\bigl(\ov{X},\,\Z/p^r(i)\bigr)=H^{i+1}\bigl(\ov{k},\,\Z/p^r(i)\bigr)=0
\]by Thm.\;\ref{3p2temp} (4), it follows from \eqref{3p6p1temp} that
\begin{equation}\label{3p9p7temp}
	H^{i+1}\bigl(\ov{X}\,,\,\Z/p^r(i)\bigr)=H^1_{Zar}\bigl(\ov{X}\,,\,\sH^i_{p^r}(i)\bigr)\,.
\end{equation}
For $i=0$, we can deduce from \eqref{3p9p7temp} and \eqref{3p2p2temp} that $H^1(\ov{X},\,\Z/p^r)=0$. Hence \eqref{3p9p6temp} yields $H^1(k,\,\Z/p^r)\cong H^1(X,\,\Z/p^r)$ in this case, recovering the result of Prop.\;\ref{3p4temp}.

For $i=1$, from \eqref{3p9p7temp} and \eqref{3p2p3temp} we get
\begin{equation}\label{3p9p8temp}
	H^2\bigl(\ov{X}\,,\,\Z/p^r(1)\bigr)\cong \CH^1(\ov{X})/p^r\;.
\end{equation}(This can also be deduced from Prop.\;\ref{3p7temp}.) So the case $i=1$ of \eqref{3p9p6temp} gives an exact sequence
\begin{equation}\label{3p9p9temp}
	0\lra H^2\bigl(k,\,\Z/p^r(1)\bigr)\lra H^2\bigl(X,\,\Z/p^r(1)\bigr)\lra H^0\bigl(k\,,\,\CH^1(\ov{X})/p^r\bigr)\lra 0\,.
\end{equation}

Let $\sK_i$ denote the Zariski sheaf defined by Quillen's $K$-theory. Using the Bloch--Kato--Gabber theorem, we can deduce an exact sequence (\cite[Thm.\;4.13]{GrosSuwa88b})
\begin{equation}\label{3p9p10temp}
	0\lra H^{i+1}_{Zar}\bigl(X\,,\,\sK_i\bigr)/p^r\lra H^{i-1}_{Zar}\bigl(X\,,\,\sH^i_{p^r}(i)\bigr)\lra \CH^i(X)[p^r]\lra 0\;
\end{equation}for $X$ and similarly for $\ov{X}$.

Now assume further that $X$ is a projective homogeneous variety. Then $\CH^i(\ov{X})$ is torsion-free and  by \cite[\S\,1, Prop.\;1]{Merkurjev95AlgiAna},
\[
H^{i-1}_{Zar}\bigl(\ov{X}\,,\,\sK_i\bigr)=K_1(\ov{k})\otimes\CH^{i-1}(\ov{X})\,.
\](As is well known, the Quillen $K$-group $K_1(\ov{k})$ here agrees with the Milnor $K$-group $K_1^M(\ov{k})$. But the Quillen $K$-theoretic viewpoint is a more natural way to understand the essentials in the proof.) So, in this case from \eqref{3p9p10temp} we get
\begin{equation}\label{3p9p11temp}
	H^{i-1}_{Zar}\bigl(\ov{X}\,,\,\sH^i_{p^r}(i)\bigr)\cong H^{i-1}_{Zar}\bigl(\ov{X}\,,\,\sK_i\bigr)/p^r\cong K_1(\ov{k})/p^r\otimes \CH^{i-1}(\ov{X})/p^r\;.
\end{equation}For $i=2$ we can combine \eqref{3p9p11temp} and \eqref{3p9p7temp} to get an isomorphism
\begin{equation}\label{3p9p12temp}
	H^3\bigl(\ov{X}\,,\,\Z/p^r(2)\bigr)\cong K_1(\ov{k})/p^r\otimes\CH^1(\ov{X})/p^r\;.
\end{equation}
In particular, this holds when $X$ is a smooth projective quadric of dimension $\ge 1$.

\medskip

We end this section with a few remarks on cohomology with divisible coefficients.

\medskip

\newpara\label{3p10temp} Given integers $b,\,i\in\N$, by taking direct limits we can define the functors
\[
\begin{split}
H^b\bigl(\cdot\,,\,\Q_p/\Z_p(i)\bigr):&=\ilim_rH^b\big(\cdot\,,\,\Z/p^r(i)\bigr)\,,\\
H^b_{nr}\bigl(\cdot,\,\Q_p/\Z_p(i)\bigr):&=\ilim_rH^b_{nr}\bigl(\cdot,\,\Z/p^r(i)\bigr)\,.
\end{split}
\]It is easy to extend all the previous discussions in this section to these cohomology groups with divisible coefficients. In particular, for a smooth connected $k$-variety $X$, by taking the limits of \eqref{3p3p1temp} and \eqref{3p6p2temp} we obtain a natural map
\[
\eta^j_{\infty}\,:\; H^j\bigl(k,\,\Q_p/\Z_p(j-1)\bigr)\lra H^j_{nr}\bigl(X,\,\Q_p/\Z_p(j-1)\bigr)\;
\]for each $j\ge 1$, and a cycle class map with divisible coefficients
\[
\cl^i_{\infty}\,:\;\CH^i(X)\otimes(\Q_p/\Z_p)\lra H^{2i}\bigl(X,\,\Q_p/\Z_p(i)\bigr)
\]for each $i\ge 1$.

A useful standard fact (which follows from \cite[Lemma\;6.6]{Izhboldin91}) is that the sequence
\begin{equation}\label{3p10p1temp}
  0\to H^j_{p^r}(K,\,j-1)\to H^j\bigl(K,\,\Q_p/\Z_p(j-1)\bigr)\xrightarrow[]{\times p^r}H^j\bigl(K,\,\Q_p/\Z_p(j-1)\bigr)\to 0
\end{equation} is exact for any field extension $K/k$. As a consequence, we have an identification
\begin{equation}\label{3p10p2temp}
  \ker(\eta^j_r)=\ker(\eta^j_{\infty})[p^r]\,.
\end{equation}

\section{Some general observations for quadrics}

From now on, we work over a field $F$ of characteristic 2 (although this characteristic restriction is unnecessary in some results, e.g., Prop.\;\ref{4p1temp}, Cor.\;\ref{4p2temp} and Prop.\;\ref{4p4temp}).

Let $X$ denote a smooth projective quadric of dimension $\ge 1$ over $F$.  Notation and results in \S\,\ref{sec3} will be applied with $p=2$.

For each $j\ge 1$, we have the natural maps (cf. \eqref{3p3p1temp} and \eqref{3p10temp})
\[
\eta^j_r=\eta^j_{r,\,X}\,:\; H^j\bigl(F,\,\Z/2^r(j-1)\bigr)\lra H^j_{nr}\bigl(X,\,\Z/2^r(j-1)\bigr)\;,\quad r\ge 1
\]and
\[
\eta^j_\infty=\eta^j_{\infty,\,X}\,:\; H^j\bigl(F,\,\Q_2/\Z_2(j-1)\bigr)\lra H^j_{nr}\bigl(X,\,\Q_2/\Z_2(j-1)\bigr)\;.
\]

\begin{prop}[{\cite[Prop.\;2.5]{KahnRostSujatha98}}]\label{4p1temp}
  With notation as above, the following statements hold:

  \begin{enumerate}
    \item If $X$ is isotropic over $F$, then the maps $\eta^j_{r,\,X}$ and $\eta^j_{\infty,\,X}$ are all isomorphisms.
    \item In general, the maps $\eta^j_{r,\,X}$ and $\eta^j_{\infty,\,X}$ all have $2$-torsion kernel and cokernel.
    \item Let $Y$ be another smooth projective quadric of dimension $\ge 1$ over $F$. If $X$ is isotropic over the function field $F(Y)$, then there is a natural commutative diagram
        \[
        \xymatrix{
        & H^j\bigl(F,\,\Z/2^r(j-1)\bigr) \ar[dl]_{\eta^j_{r,\,X}} \ar[dr]^{\eta^j_{r,\,Y}} & \\
        H^j_{nr}\bigl(X,\,\Z/2^r(j-1)\bigr) \ar[rr]_{\rho} &&  H^j_{nr}\bigl(Y,\,\Z/2^r(j-1)\bigr)
        }
        \]
        If moreover $Y$ is isotropic over $F(X)$, the map $\rho$ in the above diagram is an isomorphism.

        Similar results hold in the case of divisible coefficients.
  \end{enumerate}
\end{prop}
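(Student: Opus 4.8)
The plan is to adapt the characteristic-free argument of Kahn, Rost and Sujatha, checking that every ingredient it uses is supplied by Section~\ref{sec3} and the recollections in~\ref{Un2p1}. For part~(1): if $X$ is isotropic over $F$ then $\vp$ is isotropic, so $X=X_\vp$ is $F$-rational (recalled in~\ref{Un2p1}), and Proposition~\ref{Un3p5} shows that every $\eta^j_{r,X}$ is an isomorphism; passing to the direct limit over $r$ (a filtered colimit, hence exact) gives the same for $\eta^j_{\infty,X}$.

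For part~(2) I would first reduce to the anisotropic case by part~(1), and then produce a \emph{separable} quadratic extension $E/F$ over which $\vp$ becomes isotropic. Such an $E$ exists: restricting $\vp$ to a sufficiently general $2$-dimensional subspace $W\subseteq V$ yields a nondegenerate binary form, which is a scalar multiple of some $[1,b]=\Pfi{b}$; it is anisotropic (otherwise $\vp$ would be isotropic), so $b\notin\wp(F)$ and $E:=F[t]/(t^2-t-b)$ works. Over $E$ the quadric $X_E$ is isotropic, so $\eta^j_{r,X_E}$ is an isomorphism by part~(1). Then I would use the restriction maps together with the corestriction/transfer maps attached to the finite \'etale cover $X_E\to X$ --- available on the coefficient complexes $\Z/2^r(j-1)$ (the logarithmic Hodge--Witt / \'etale motivic complexes of~\ref{Un3p1}) and on unramified cohomology, compatibly through the $\eta$'s --- and the projection formula $\Cor\circ\Res=[E:F]=2$. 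A diagram chase then finishes it: if $\alpha\in\ker(\eta^j_{r,X})$ then $\Res(\alpha)$ maps to $0$ under the injective $\eta^j_{r,X_E}$, so $\Res(\alpha)=0$ and $2\alpha=\Cor\,\Res(\alpha)=0$; and for any $\beta\in H^j_{nr}(X,\,\Z/2^r(j-1))$, lifting $\Res(\beta)$ along the surjective $\eta^j_{r,X_E}$ and applying $\Cor$ exhibits $2\beta$ in the image of $\eta^j_{r,X}$. The divisible-coefficient statement follows by the same chase, or by passing to the limit.

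For part~(3) the plan is to work on the product $Z:=X\times_F Y$, which is a smooth, geometrically integral $F$-variety, proper over both factors. The projection $\mathrm{pr}_Y\colon Z\to Y$ is proper with generic fibre $X_{F(Y)}$, which is $F(Y)$-rational by hypothesis; hence by Theorem~\ref{Un3p2}(4) the pullback $\mathrm{pr}_Y^*\colon H^j_{nr}(Y)\simto H^j_{nr}(Z)$ is an isomorphism, and I define $\rho:=(\mathrm{pr}_Y^*)^{-1}\circ \mathrm{pr}_X^*$, where $\mathrm{pr}_X^*\colon H^j_{nr}(X)\to H^j_{nr}(Z)$ is the pullback along the smooth morphism $\mathrm{pr}_X\colon Z\to X$. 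Commutativity of the triangle is then immediate, since both $\mathrm{pr}_X^*\circ\eta^j_{r,X}$ and $\mathrm{pr}_Y^*\circ\eta^j_{r,Y}$ are nothing but the restriction $H^j(F,\,\Z/2^r(j-1))\to H^j(F(Z),\,\Z/2^r(j-1))$. If in addition $\vp$ is isotropic over $F(X)$, then symmetrically $\mathrm{pr}_X^*$ is an isomorphism (Theorem~\ref{Un3p2}(4) again), so $\rho$ is an isomorphism; equivalently, $\rho$ and the analogously defined map in the other direction are mutually inverse by the uniqueness built into their definitions. The divisible-coefficient versions follow by taking direct limits over $r$.

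I expect the only real obstacle to be the transfer step in part~(2): one must make sure that restriction, corestriction and the projection formula $\Cor\circ\Res=[E:F]$ are all available, compatibly, both for the mod-$2^r$ motivic complexes $\Z/2^r(j-1)$ in characteristic $2$ and for the unramified cohomology of the degree-$2$ cover $X_E\to X$ --- this is exactly where the general formalism of Colliot-Th\'el\`ene--Hoobler--Kahn and the de Rham--Witt machinery recalled in Section~\ref{sec3} must be invoked with some care. Everything else is a formal consequence of Theorem~\ref{Un3p2}, Proposition~\ref{Un3p5}, and the rationality of isotropic quadrics.
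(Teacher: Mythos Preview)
The paper does not give its own proof here: it simply cites \cite[Prop.\;2.5]{KahnRostSujatha98} and remarks that the argument there is characteristic-free. Your proposal is a correct and detailed working-out of exactly that argument (rationality of isotropic quadrics plus Proposition~\ref{Un3p5} for (1); restriction--corestriction along a separable quadratic splitting extension for (2); the product $X\times_FY$ together with Theorem~\ref{Un3p2}(4) for (3)), so there is nothing further to compare.
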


Note that Prop.\;\ref{4p1temp} is characteristic free, because the proof is more or less a formal consequence of some general facts from the algebraic and geometric theories of quadratic forms. For example, part (1) holds simply because a smooth quadric with a rational point is a rational variety, and part (2) is immediate from the functoriality and the easy fact that any anisotropic quadratic form becomes isotropic over a quadratic extension.

\begin{coro}[{\cite[Lemma\;1.3]{ColliotSujatha95}}]\label{4p2temp}
	Suppose $\vp$ is a Pfister form of dimension $\ge 4$ and let $\psi$ be a neighbor of $\vp$. Let $X$ and $Y$ be the projective quadrics defined by $\vp$ and $\psi$ respectively.
	
	Then for every $j\ge 1$ we have isomorphisms
	\[
	\ker(\eta^j_{r,\,X})=\ker(\eta^j_{r,\,Y})\quad\text{and}\quad \coker(\eta^j_{r,\,X})\cong\coker(\eta^j_{r,\,Y})\,,\quad r\ge 1\;,
	\]and similarly in the case of divisible coefficients.
\end{coro}
\begin{proof}
	Apply Prop.\;\ref{4p1temp} (3).
\end{proof}

The lemma below relies on the characteristic 2 assumption, because in the proof we need the surjectivity part of the exact sequence \eqref{3p10p1temp}. In fact, this is also a source of some differences between our results and the known results in characteristic $\neq 2$. (See e.g. the proof of Prop.\;\ref{5p3temp} below.)

\begin{lemma}\label{4p3temp}
  For every  $r\ge 1$ and $j\ge 1$, $\ker(\eta^j_r)=\ker(\eta^j_{\infty})$ and there is an exact sequence
  \[
  0\lra \ker(\eta^j_r)\lra \coker(\eta^j_r)\lra \coker(\eta^j_{\infty})\lra 0\,.
  \]
\end{lemma}
\begin{proof}
The first assertion is immediate from \eqref{3p10p2temp} and Prop.\;\ref{4p1temp} (2).

By functoriality and \eqref{3p10p1temp} we have the commutative diagram with exact rows
{\small
\[
\xymatrix{
0 \ar[r] & H^j\bigl(F,\,\Z/2^r(j-1)\bigr) \ar[d]_{\eta^j_r} \ar[r] & H^j\bigl(F,\,\Q_2/\Z_2(j-1)\bigr) \ar[d]_{\eta^j_\infty} \ar[r]^{\times 2^r} & H^j\bigl(F,\,\Q_2/\Z_2(j-1)\bigr) \ar[r] \ar[d]_{\eta^j_\infty} & 0\\
0 \ar[r] & H_{nr}^j\bigl(X,\,\Z/2^r(j-1)\bigr) \ar[r] & H^j_{nr}\bigl(X,\,\Q_2/\Z_2(j-1)\bigr) \ar[r]^{\times 2^r} & H^j_{nr}\bigl(X,\,\Q_2/\Z_2(j-1)\bigr) &
}
\]
}Applying the snake lemma to this diagram yields the desired exact sequence, noticing that $\coker(\eta^j_{\infty})$ is 2-torsion.
\end{proof}

Thanks to the above lemma, when studying the kernel and the cokernel of $\eta^j_r$ we may restrict to the case $r=1$.

\medskip

The following result is a special case of \cite[Prop.\;5.2]{Kahn99MotivicCellular}.

\begin{prop}\label{4p4temp}
  There is a natural isomorphism $\coker(\eta^3_{\infty})\cong \ker(\cl^2_{\infty})$ for the smooth projective quadric $X$.
\end{prop}

\section{Results for conic curves}\label{sec5}

To simplify the notation, we henceforth write
\[
\begin{split}
  H^b(\cdot\,,\,i)&=H^b_{\et}\big(\cdot\,,\,\Z/2(i)\big)\,,\;\sH^b(i)=\sH^b_{2}(i)\,,\\
H^{i+1}(\cdot)&=H^{i+1}(\cdot\,,\,i)\,,\; H^{i+1}_{nr}(\cdot)=H^{i+1}_{nr}(\cdot\,,\,i)\;.
\end{split}
\]
In this section and the next, we study the maps
\[
\begin{split}
\eta^j:=\eta^j_1\,:\; H^j(F)&\lra H^j_{nr}(X)\,,\\
\eta^j_{\infty}\,:\; H^j\bigl(F,\,\Q_2/\Z_2(j-1))&\lra H^j_{nr}\bigl(X\,,\,\Q_2/\Z_2(j-1)\bigr)
\end{split}
\]for the quadric $X$. They are both isomorphisms if $X$ is isotropic (Prop.\;\ref{4p1temp} (1)) or $j=1$ (Prop.\;\ref{3p4temp}). So we assume $X$ is anisotropic and $j\ge 2$ in the sequel.

\

For a conic curve an explicit description is known for  the kernel of the map $\eta^j$.

\begin{prop}\label{5p1temp}
	Let $X\subseteq\bP^2_F$ be the conic associated to a quaternion $F$-algebra $D$.
	
	Then for all $j\ge 2$,
	\[
	\ker\left(\eta^j\,:\; H^j(F)\lra H^j_{nr}(X)\right)=\{(D)\cup\xi\,|\,\xi\in K^M_{j-2}(F)\}
	\]where $(D)\in H^2(F)=\Br(F)[2]$ denotes the Brauer class of $D$.
\end{prop}
\begin{proof}
	Since $H^j_{nr}(X)=H^0_{Zar}\big(X,\,\sH^j(j-1)\big)$ is a subgroup of $H^j(F(X))$, $\ker(\eta^j)$ coincides with the kernel of the natural map $H^j(F)\to H^j(F(X))$. The result thus follows from \cite[Thm.\;3.6]{AravireJacob09Contemp493}.
\end{proof}

\begin{remark}\label{5p2temp}
	In Prop.\;\ref{5p1temp}, the case $j=2$ amounts to an exact sequence
	\[
	0\lra \Z/2\overset{\delta}{\lra} H^2(F)\lra H^2(F(X))
	\]where the map $\delta$ sends $1$ to the Brauer class $(D)$. This is in fact a special case of Amitsur's theorem (cf. \cite[Thm.\;5.4.1]{GilleSzamuely17}).
	
	For $j=3$, the proposition gives a characteristic 2 analogue of a theorem of Arason \cite[Satz\;5.4]{Arason75JA}, i.e., we have an exact sequence
	\[
	F^*\xrightarrow[]{\cup (D)} H^3(F)\lra H^3(F(X))\;.
	\]The  kernel of the first map is the group $\mathrm{Nrd}(D^*)$ of reduced norms of $D$ by \cite[p.94, Thm.\;6]{Gille00Ktheory}. So in this case we have an isomorphism $F^*/\Nrd(D^*)\cong \ker(\eta^3)$.
\end{remark}

The following result is slightly different from its counterpart in characteristic $\neq 2$ (cf. \cite[p.246, Remarks\;(4)]{Kahn95ArchMath}). As in Lemma\;\ref{4p3temp}, the characteristic 2 assumption is crucial since the surjectivity in \eqref{3p10p1temp} is needed.

\begin{prop}\label{5p3temp}
	Let $X\subseteq\bP^2_F$ be a smooth anisotropic conic. Then $\coker(\eta^2)\cong\Z/2$.
\end{prop}
\begin{proof}
		By  \eqref{3p2p5temp}, we can identify $\eta^2$ with the natural map $\Br(F)[2]\to \Br(X)[2]$. By considering the
	 the Hochschild--Serre spectral sequence for the sheaf $\mathbb{G}_m$ on $X$ we can get an exact sequence
	\[
	0\lra \Z/2\lra \Br(F)\lra \Br(X)\lra 0\;.
	\]The Brauer group $\Br(F)$ is 2-divisible since $\car(F)=2$ (cf. \eqref{3p10p1temp}). Thus, multiplying the above sequence by 2 and applying the snake lemma shows $\coker(\eta^2)\cong \Z/2$ as claimed.
\end{proof}

Our goal now is to extend Peyre's results in \cite[\S\,2]{Peyre95ProcSymPureMath58} to characteristic 2. A common feature in the arguments of Peyre's and ours is the use of vanishing results for terms in the Hochschild--Serre spectral sequence. But the spectral sequences in the two different cases look remarkably different, because vanishing holds for different reasons and hence the positions of vanishing terms are not same. (In characteristic $\neq 2$, homotopy invariance is an ingredient guaranteeing some vanishing results.) Indeed, the cohomology groups of the projective line are already different in different characteristics (see \eqref{5p4temp} below).  As was mentioned in \eqref{3p9temp}, in our situation the cohomological 2-dimension of the base field plays a key role, and the description of certain cohomology groups relies on the vanishing result in \eqref{3p2p2temp} (or its consequence \eqref{3p2p4temp}).

\

\newpara\label{5p4temp} Let $X\subseteq\bP^2_F$ be a smooth conic. Fix $i\in\N$. In the Hochschild--Serre spectral sequence
$H^p(F,\,H^q(\ov{X},\,i))\Rightarrow H^{p+q}(X,\,i)$ we have
\[
	H^q(\ov{X},\,i)=H^q\big(\bP^1_{\ov{F}}\,,\,i\big)=\begin{cases}
		H^{i-1}(\ov{F},\,i-1)=K_{i-1}^M(\ov{F})/2\quad & \text{ if }\; q=i+1\,,\\
		H^i(\ov{F},\,i)=K_i^M(\ov{F})/2 \quad & \text{ if }\; q=i\,,\\
		0 \quad & \text{ otherwise }
	\end{cases}
\]
by \cite[(2.1.15)]{Gros85} (and the Bloch--Kato--Gabber theorem). This combined with \eqref{3p9p2temp}, \eqref{3p9p3temp}, \eqref{3p9p5temp} and \eqref{3p9p6temp} yields isomorphisms
\[\begin{split}
	H^i(F,\,i)&=H^0(F,\,H^i(\ov{X},\,i))\cong H^i(X,\,i)\,,\\
 H^i(F)&=H^1(F,\,H^{i+1}(\ov{X}))\cong H^{i+2}(X,\,i)\;
\end{split}\]
and an exact sequence
\begin{equation}\label{5p4p1temp}
	0\lra H^{i+1}(F)\lra H^{i+1}(X)\overset{\rho}{\lra} H^{i-1}(F,\,i-1)\lra 0\;.
\end{equation}
Since $\dim X=1$, \eqref{3p6p1temp} gives a short exact sequence
\begin{equation}\label{5p4p2temp}
	0\lra H^1_{Zar}\big(X,\,\sH^i(i)\big)\overset{\iota}{\lra} H^{i+1}(X)\lra H^{i+1}_{nr}(X)\lra 0\,.
\end{equation}
Also, by the Gersten resolution of $\sH^i(i)$ we have
\[
	H^1_{Zar}\big(X,\,\sH^i(i)\big)=\coker\biggl(H^i(F(X),\,i)\lra \bigoplus_{x\in X^{(1)}}H^{i-1}(\kappa(x)\,,\,i-1)\biggr)\;.
\]
In particular, there is a natural surjection
\[
\bigoplus_{x\in X^{(1)}}H^{i-1}(\kappa(x)\,,\,i-1)\lra H^1_{Zar}\big(X\,,\,\sH^i(i)\big)\;.
\]
The same arguments as in the proof of \cite[p.377, Lemma\;2.1]{Peyre95ProcSymPureMath58} show that the diagram
\begin{equation}\label{5p4p3temp}
	\xymatrix{
		\bigoplus\limits_{x\in X^{(1)}}H^{i-1}(\kappa(x)\,,\,i-1) \ar[d]_{\oplus\Cor_{\kappa(x)/F}} \ar@{->>}[rr] && H^1_{Zar}\big(X\,,\,\sH^i(i)\big) \ar[d]^{\iota} \\
		H^{i-1}(F,\,i-1) && \ar[ll]_{\rho} H^{i+1}(X)
	}
\end{equation}
is commutative. Let $D$ be the quaternion $F$-algebra corresponding to the conic $X$. For each $x\in X^{(1)}$ we have $(D)_{\kappa(x)}=0$ in $H^2(\kappa(x))=\Br(\kappa(x))[2]$. Therefore,
\[
\Cor_{\kappa(x)/F}(\al)\cup (D)=\Cor_{\kappa(x)/F}\big(\al\cup (D)_{\kappa(x)}\big)=0\quad\text{ for all }\,\al\in H^{i-1}(\kappa(x)\,,\,i-1)\,.
\]This together with the commutative diagram \eqref{5p4p3temp} shows that the composite map
\[
H^1_{Zar}(X,\,\sH^i(i))\overset{\iota}{\lra} H^{i+1}(X)\overset{\rho}{\lra}H^{i-1}(F,\,i-1)\xrightarrow[]{\cup (D)} H^{i+1}(F)
\]
is 0. Now, using \eqref{5p4p2temp} we can define
\begin{equation}\label{5p4p4temp}
	N\;:\; H^{i+1}_{nr}(X)\lra H^{i+1}(F)
\end{equation}to be the unique homomorphism making the following diagram commute:
\[
\xymatrix{
	H^{i+1}(X)
	\ar@{->>}[d]_{\rho} \ar@{->>}[r] & H^{i+1}_{nr}(X) \ar[d]^{N} \\
	H^{i-1}(F,\,i-1) \ar[r]^{\quad\cup (D)}&  H^{i+1}(F).
}
\]
Note that \eqref{5p4p2temp} also yields a homomorphism
\[
	\tau\;:\; \ker(\eta^{i+1})=\ker\big(H^{i+1}(F)\to H^{i+1}(F(X))\big)\lra H^1_{Zar}\big(X\,,\,\sH^i(i)\big)
\]
such that the diagram
\[
\xymatrix{
	0\ar[r] & \ker(\eta^{i+1}) \ar[d]_{\tau} \ar[r] & H^{i+1}(F)  \ar[d] \ar[r]^{\eta^{i+1}} & H^{i+1}_{nr}(X) \ar@{=}[d] & \\
	0 \ar[r] & H^1_{Zar}\big(X,\,\sH^i(i)\big) \ar[r]^{\qquad\iota} & H^{i+1}(X)   \ar[r]&  H^{i+1}_{nr}(X) \ar[r] & 0
}
\]is commutative with exact rows.

Now we have the following complex
\begin{equation}\label{5p4p5temp}
	\begin{split}
		0&\lra \ker(\eta^{i+1})\overset{\tau}{\lra} H^1_{Zar}\big(X,\,\sH^i(i)\big)\overset{\rho\circ\iota}{\lra} H^{i-1}(F,\,i-1)\\
		&\xrightarrow[]{\cup (D)} H^{i+1}(F)\xrightarrow[]{\eta^{i+1}} H^{i+1}_{nr}(X) \overset{N'}{\lra} H^{i-1}(F,\,i-1)\cup (D)\lra 0
	\end{split}
\end{equation}
where
\[
H^{i-1}(F,\,i-1)\cup (D):=\im\left( H^{i-1}(F,\,i-1)\xrightarrow[]{\cup (D)}   H^{i+1}(F)\right)
\]and $N'$ is induced by the map $N$ in \eqref{5p4p4temp}.

\

We can now prove a characteristic 2 counterpart of \cite[p.379, Prop.\;2.2]{Peyre95ProcSymPureMath58}.

\begin{prop}\label{5p5temp}
	The complex $\eqref{5p4p5temp}$ is exact except possibly at the third term $H^{i-1}(F,\,i-1)$ and the fifth term $H^{i+1}_{nr}(X)$.
	
	Moreover,
	\[
	\frac{\ker(N')}{\im(\eta^{i+1})}\cong  \frac{\ker(\cup (D))}{\im(\rho\circ\iota)}=\frac{\ker(\cup (D))}{\im\biggl(\oplus\Cor_{\kappa(x)/F}\,:\;
		\bigoplus\limits_{x\in X^{(1)}}H^{i-1}(\kappa(x),\,i-1)\to H^{i-1}(F,\,i-1)\biggr)}
	\]	
	If $i\le 2$, then $\eqref{5p4p5temp}$ is exact everywhere.
\end{prop}
\begin{proof}
	The injectivity of $\tau$ is a consequence of the injectivity of $H^{i+1}(F)\to H^{i+1}(X)$, and the surjectivity of $N'$ follows from that of the map $\rho$ (cf. \eqref{5p4p1temp}).
	The equality $\im(\tau)=\ker(\rho\circ\iota)$ can be easily shown by a diagram chase, and the equality $\im(\cup (D))=\ker(\eta^{i+1})$ was Prop.\;\ref{5p1temp}.
	
	To get the isomorphism $\frac{\ker(N')}{\im(\eta^{i+1})}\cong  \frac{\ker(\cup (D))}{\im(\rho\circ\iota)}$, it suffices to apply the snake lemma to the following commutative diagram
	\[
	\xymatrix{
		0 \ar[r] & H^1_{Zar}\big(X,\,\sH^i(i)\big) \ar[d]_{(\rho\circ\iota)'} \ar[r]^{\quad\iota} & H^{i+1}(X) \ar@{->>}[d]^{\rho}
		\ar[r] & H^{i+1}_{nr}(X) \ar@{->>}[d]^{N'} \ar[r] & 0\\
		0 \ar[r] & \ker(\cup(D)) \ar[r] & H^{i-1}(F,\,i-1) \ar[r]^{\cup (D)\quad} & H^{i-1}(F,\,i-1)\cup (D) \ar[r] & 0.
	}
	\]	
	From the diagram \eqref{5p4p3temp} we find
	\[
	\im(\rho\circ\iota)=\im\biggl(\oplus\Cor_{\kappa(x)/F}\,:\;
	\bigoplus\limits_{x\in X^{(1)}}H^{i-1}(\kappa(x),\,i-1)\to H^{i-1}(F,\,i-1)\biggr)\;.
	\]
	
	If $i=0$, then $H^{i-1}(F,\,i-1)=0$. If $i=1$, then
	\[
	\im(\rho\circ\iota)=\im(\deg\,:\;\CH^1(X)/2\lra \Z/2)
	\]and it is equal to $\ker\bigl(\Z/2\xrightarrow[]{\cup (D)}H^2(F)\bigr)$. If $i=2$, the equality $\im(\rho\circ\iota)=\ker(\cup (D))$ follows from \cite[p.94, Thm.\;6]{Gille00Ktheory}.
\end{proof}

\begin{coro}\label{5p6temp}
	Let $X\subseteq\bP^2_F$ be a smooth conic with associated quaternion algebra $D$. Then we have isomorphisms
	\[
	\coker(\eta^3)\cong H^1(F,\,1)\cup (D)\cong F^*/\Nrd(D^*)\cong \ker(\eta^3)\;.
	\]
\end{coro}
\begin{proof}
The first isomorphism follows from the $i=2$ case of Prop.\;\ref{5p5temp}. The other isomorphisms have been discussed in Remark\;\ref{5p2temp}.
\end{proof}

\begin{coro}\label{5p7temp}
	Suppose $\vp$ is  the reduced norm form of a quaternion division algebra $D$.   Then  we have isomorphisms
	\[
	\begin{split}
		\ker(\eta^j)&=H^{j-2}(F,\,j-2)\cup (D)\;,\quad \text{for all } j\ge 2\;,\\
		\ker(\eta^2)&\cong \coker(\eta^2)\cong 			\Z/2\;,\\
		\ker(\eta^3)&\cong\coker(\eta^3)\cong H^1(F,\,1)\cup (D)\cong F^*/\Nrd(D^*)\;.
	\end{split}
	\]
\end{coro}
\begin{proof}
	This follows from Cor.\;\ref{4p2temp} and the corresponding results for conics (Props.\;\ref{5p2temp}, \ref{5p3temp} and Cor.\;\ref{5p6temp}).
\end{proof}

\begin{remark}\label{5p8temp}
  Suppose $3\le\dim\vp\le 4$. Then the cycle class map with divisible coefficients
\[
\cl^2_{\infty}\,:\; \CH^2(X)\otimes(\Q_2/\Z_2)\lra H^4\bigl(X,\,\Q_2/\Z_2(2)\bigr)\;
\]is injective and the map  $\eta^3_{\infty}: H^3\bigl(F,\,\Q_2/\Z_2(2)\bigr)\to H^3_{nr}\bigl(X,\,\Q_2/\Z_2(2)\bigr)$ is surjective.

By Prop.\;\ref{4p4temp}, if one of the two groups $\ker(\cl^2_{\infty})$  and $\coker(\eta^3_{\infty})$ is trivial, then so is the other. If $\dim\vp=3$, then $\dim X=1$ and $\CH^2(X)=0$, so trivially $\ker(\cl^2_{\infty})=0$. If $\dim\vp=4$, as in the proof of Cor.\;\ref{5p7temp} we may reduce to the case where $\vp$ is a 2-Pfister form. Then we can apply Cor.\;\ref{4p2temp} to obtain $\coker(\eta^3_{\infty})=0$ by passing to the case of conics.

As a corollary, we have an isomorphism $\ker(\eta^3)\cong \coker(\eta^3)$ in view of Lemma\;\ref{4p3temp}. In fact, it is shown in \cite[Thm.\;5.6]{HLS21} that
\[
\ker(\eta^3)=\{0\}\cup\{(a)\cup (b)\cup (c]\,|\,a,\,b,\,c\in F^*,\, \vp \text{ is similar to a subform of } \langle\!\langle a,\,b\,;\,c]]\}\,.
\]
\end{remark}

\section{Quadrics of dimension $\ge 2$}

In this section, we prove our main results about low degree unramified cohomology for quadrics of dimension at least 2. These extend the main theorems of \cite{Kahn95ArchMath} to characteristic 2. While the basic strategy is derived from Kahn's paper, we do need to check quite a few details, and in doing so (e.g. in the proofs of Lemma\;\ref{6p3temp} (2) and Prop.\;\ref{6p4temp}) we do use some ingredients (such as the exact sequences \eqref{3p9p3temp} and \eqref{3p9p6temp}) that do not show up in characteristic $\neq 2$. In the proof of Thm.\;\ref{6p11temp} we even have to use a different approach which builds upon Lemma\;\ref{4p3temp}, a result that is special in characteristic 2 itself.

Throughout this section, let $\vp$ be a nondegenerate  quadratic form of dimension $\ge 3$ over the field $F$ and let $X$ be the projective quadric defined by $\vp$.

\begin{lemma}\label{6p1temp}
	For each $i\in\N$, let $\xi^i:\CH^i(X)/2\to H^0(F,\,\CH^i(\ov{X})/2)$ be the canonical map. Assume $\vp$ is anisotropic.
	
	\begin{enumerate}
		\item If  $\dim\vp=4$ and $e_1(\vp)=0$, then  $\ker(\xi^1)\cong\coker(\xi^1)\cong\Z/2$.
		
		Otherwise $\xi^1$ is an isomorphism.
		\item Suppose $4\le \dim\vp\le 5$. Then $\xi^2=0$, $\coker(\xi^2)\cong \Z/2$ and
		\[
		\ker(\xi^2)\cong\begin{cases}
			(\Z/2)^2\quad & \text{ if }\; \dim\vp=5 \text{ and $\vp$ is a Pfister neighbor}\,,\\
			\Z/2\quad & \text{ otherwise}\,.
		\end{cases}
		\]
		\item Suppose $\dim\vp\ge 6$.
		\begin{enumerate}
			\item If $\vp$ is an Albert form, then $\ker(\xi^2)\cong\coker(\xi^2)\cong \Z/2$.
			\item If $\vp$ is a $3$-Pfister neighbor, then $\ker(\xi^2)\cong     \Z/2$ and $\coker(\xi^2)=0$.
			\item In all the other cases, $\xi^2$ is an isomorphism.
		\end{enumerate}
	\end{enumerate}
\end{lemma}
\begin{proof}
The idea of proof is to compute the maps $\xi^i$ explicitly, by using the Galois module structures of the Chow groups in question. Indeed, the results we need about Chow groups can be found in \cite{Karpenko90AlgGeoInv} and \cite{HLS21}.

For example, if  $\vp$ is a 3-Pfister neighbor of dimension $\ge 6$, then  $\CH^2(X)_{\tors}\cong \Z/2$ by \cite[Thm.\;5.3]{HLS21},
\[
\CH^2(X)=\CH^2(X)_{\tors}\oplus \Z.h^2\quad\text{ and }\quad
\CH^2(\ov{X})^{\Gal(\ov{F}/F)}=\Z.h^2\,.
\] The map $\xi^2$ is the identity on $(\Z/2\Z).h^2$ and 0 on $\CH^2(X)_{\tors}$.
\end{proof}

Recall that we have defined the cycle class maps  in \eqref{3p6p2temp}. Here we only need the mod 2 case.

\begin{coro}\label{6p2temp}
	The cycle class map $\cl^2_X: \CH^2(X)/2\to H^4(X,\,2)$ is injective in the following cases:
	
	\begin{enumerate}
		\item $\dim\vp=6$ and $\vp$ is neither an Albert form nor a Pfister neighbor.
		\item $7\le \dim\vp\le 8$ and $\vp$ is not a Pfister neighbor.
		\item $\dim\vp>8$.
	\end{enumerate}
\end{coro}
\begin{proof}
By functoriality, the result follows from  Lemma\;\ref{6p1temp} (3) and the injectivity of  the cycle class map $\cl^2_{\ov{X}}$ for $\ov{X}$ (Prop.\;\ref{3p8temp}).
\end{proof}

\begin{lemma}\label{6p3temp}
	Suppose $X$ is defined by the reduced norm form of a quaternion division algebra $D$ over $F$.

\begin{enumerate}
  \item  Let $P$ be a closed point in $X$. The cycle class map $\cl^2_X:\CH^2(X)/2\to H^4(X,\,2)$ sends the class $[P]\in\CH^2(X)/2$ to the cup product $(D)\cup \cl^1_X(h)\in H^4(X,\,2)$.
  \item We have natural isomorphisms
	\[
	H^2(F\,,\,H^2(\ov{X},\,1)\otimes \Z/2(1))\cong H^2(F)\oplus H^2(F)\cong H^1\bigl(F\,,\,H^3(\ov{X},\,2)\bigr)\,.
	\]
\end{enumerate}
\end{lemma}
\begin{proof}
(1) This is a restatement of \cite[Lemma\;2]{Kahn95ArchMath}, whose proof still works in characteristic 2. In fact, the key ingredients in that proof all have characteristic 2 versions: If $C\subseteq X$ is a smooth hyperplane section, then $H^2(F)=\ker\bigl(H^2(C)\to H^2(\ov{C})\bigr)$ by \eqref{3p9p6temp}, the map $\cl^1_C$ is injective by Prop.\;\ref{3p7temp}, and functorial properties of Gysin maps hold by \cite[Chap.\;II, Cor.\;2.2.5 and Prop.\;4.2.3]{Gros85}.

(2) We have seen in \eqref{3p9p8temp} that $H^2(\ov{X},\,1)\cong \CH^1(\ov{X})/2$. The Galois action on $\CH^1(\ov{X})/2$ being trivial, the first isomorphism follows.
	
	To obtain the second isomorphism, note that
	\[
	H^3(\ov{X},\,2)\cong K_1(\ov{F})/2\otimes\CH^1(\ov{X})/2\cong (K_1(\ov{F})/2)^2
	\]by \eqref{3p9p12temp}. Applying the second formula in \eqref{3p9p5temp} with $i=1$, we get
	\[
	H^1(F\,,\,H^3(\ov{X},\,2))\cong H^1(F,\,K_1(\ov{F})/2)^2\cong H^2(F)\oplus H^2(F)\,.
	\]This completes the proof.
\end{proof}

Note that the proof of Lemma\;\ref{6p3temp} (2) used a special case of the second isomorphism in \eqref{3p9p5temp}, which is only valid in characteristic 2. This result is relied on in the proof of Prop.\;\ref{6p4temp} below, and together with \eqref{3p9p3temp} this serves as a characteristic 2 substitute for Shyevski's computation in \cite[\S\,5]{Shyevski90}, which was used in \cite{Kahn95ArchMath} in characteristic $\neq 2$.

\begin{prop}\label{6p4temp}
	The cycle class map $\cl^2_X: \CH^2(X)/2\to H^4(X,\,2)$ is injective in the following two cases:
	
	\begin{enumerate}
		\item $\dim\vp=4$.
		\item $\dim\vp=5$ and $\vp$ is not a Pfister neighbor.
	\end{enumerate}
\end{prop}
\begin{proof}
As in the proof of \cite[Prop.\;1]{Kahn95ArchMath}, it is sufficient to treat the case where $\vp$ is the reduced norm form of a quaternion division algebra $D$ over $F$.

By Lemma\;\ref{6p3temp} (1), the generator of $\CH^2(X)/2\cong \Z/2$ is mapped to the cup product $(D)\cup \cl^1_X(h)\in H^4(X,\,2)$ by the cycle class map $\cl^2_X$. So it suffices to show $(D)\cup \cl^1_X(h)\neq 0$ in $H^4(X,\,2)$.
	
	Taking $j=3$ and $i=2$ in \eqref{3p9p3temp} we obtain an exact sequence
	\[
	0\lra H^1(F,\,H^3(\ov{X},\,2))\lra H^4(X,\,2)\lra H^0(F,\,H^4(\ov{X},\,2))\lra 0\,.
	\]Thus,
	\[
	(D)\cup\cl^1_X(h)\in \ker\bigl(H^4(X,\,2)\lra H^4(\ov{X},\,2)\bigr)=H^1(F,\,H^3(\ov{X},\,2))\,.
	\]
	By Lemma\;\ref{6p3temp} (2), we have $H^1(F,\,H^3(\ov{X},\,2))=H^2(F,\,H^2(\ov{X})\otimes \Z/2(1))$. Therefore, we need only to show $(D)\cup \cl^1_X(h)$ is nonzero in $H^2(F,\,H^2(\ov{X})\otimes \Z/2(1))$.
	
	Consider the commutative diagram
	\[
	\begin{CD}
		\CH^1(X)/2 @>>> \CH^1(\ov{X})/2\\
		@V{\cl^1_X}VV @VV{\cl^1_{\ov{X}} }V\\
		H^2(X,\,1) @>>> H^2(\ov{X},\,1)
	\end{CD}
	\]The top horizontal map can be identified with the map
	\[
	(0,\,\id)\;:\; (2\Z/4\Z).\ell_1\oplus (\Z/2).h\lra (\Z/2).\ell_1\oplus (\Z/2).h\;.
	\]
	The map $\cl^1_{\ov{X}}$ being an isomorphism, it follows that $\cl^1_{X}(h)\neq 0$ even in $H^2(\ov{X})$. Now the map
	\[
	\Z/2\lra H^2(\ov{X})\;;\quad 1\longmapsto \cl^1_X(h)
	\]can be identified with the map
	\[
	\iota\;:\; \Z/2\lra \Z/2\oplus\Z/2=\CH^1(\ov{X})/2\;;\quad 1\longmapsto (0,\,1)=h\;.
	\]We have the commutative diagram
	\begin{equation}\label{6p4p1temp}
		\begin{CD}
			\Z/2=H^0(F,\,0) @>{1\mapsto h}>> H^0(F,\,\CH^1(\ov{X})/2)\\
			@V{(D)\cup \cdot}VV @VV{(D)\cup \cl^1_X(\cdot)}V\\
			H^2(F,\,1) @>{\iota_*}>> H^2(F,\,H^2(\ov{X})\otimes \Z/2(1))
		\end{CD}
	\end{equation}
	The left vertical map in \eqref{6p4p1temp} is injective since $(D)\neq 0$ in $H^2(F)$. The bottom horizontal map $\iota_*$ in \eqref{6p4p1temp} can be identified with the map
	\[
	H^2(F)\lra H^2(F)\oplus H^2(F)\;;\quad \al\longmapsto (0,\,\al)
	\]via the first isomorphism given in Lemma\;\ref{6p3temp} (2). Hence $\iota_*$ is injective. Thus, the diagram \eqref{6p4p1temp} implies that $(D)\cup \cl^1_X(h)\neq 0$ in
	$H^2(F,\,H^2(\ov{X})\otimes \Z/2(1))$. As we have said before, this completes the proof.
\end{proof}

The statement of the following lemma resembles \cite[Lemma\;3]{Kahn95ArchMath}, but in its proof we need results that are special in characteristic 2.

\begin{lemma}\label{6p5temp}
\

	\begin{enumerate}
		\item If either $\dim\vp>4$, or $\dim\vp=4$ and $e_1(\vp)\neq 0$, then the map
		\[
		\cl^1=\nu^{1,\,1}\;:\; \CH^1(X)/2=H^0(F,\,0)\otimes \CH^1(X)/2\lra H^2(X)
		\]induces an isomorphism
		\[
		H^2(F)\oplus\bigl(\CH^1(X)/2\bigr)\simto H^2(X)\;.
		\]
		\item If $\dim\vp>4$, then the map
		\[
		\nu^{1,\,2}\;:\; H^1(F,\,1)\otimes\bigl(\CH^1(X)/2\bigr)\lra H^3(X)
		\](defined in $\eqref{3p6p3temp}$) induces an isomorphism
		\[
		H^3(F)\oplus\bigl(H^1(F,\,1)\otimes \CH^1(X)/2\bigr)\simto H^3(X)\;.
		\]
	\end{enumerate}
\end{lemma}
\begin{proof}
We only prove (2) here since the method of proof for (1) is similar.

We use the  commutative diagram
	\begin{equation}\label{6p5p1temp}
		\xymatrix{
			&&   H^1(F,\,1)\otimes\CH^1(X)/2 \ar[d]_{\nu^{1,\,2}} \ar@{=}[r] & H^0\bigl(F,\,K_1(\ov{F})/2\bigr)\otimes\CH^1(X)/2 \ar[d]_{\delta} & \\
			0 \ar[r] & H^3(F) \ar[r] & H^3(X) \ar[r] & H^0\bigl(F\,,\,K_1(\ov{F})/2\otimes\CH^1(\ov{X})/2\bigr) \ar[r] & 0
		}
	\end{equation}Here the bottom row is obtained by combining \eqref{3p9p6temp} with \eqref{3p9p12temp}. The right vertical map $\delta$ is an isomorphism since
	$\CH^1(X)/2\cong H^0(F,\,\CH^1(\ov{X})/2)=\CH^1(\ov{X})/2\cong\Z/2$.
\end{proof}

The following theorem is the characteristic 2 version of \cite[Thm.\;1]{Kahn95ArchMath}. Notice however that unlike the case of characteristic different from 2, the maps $\mu^{0,\,j}$ and $\eta^j$ are different in our situation.

\begin{thm}\label{6p6temp}
	If $\dim\vp>4$, or $\dim\vp=4$ and $e_1(\vp)\neq 0$, then the map
	$\eta^2: H^2(F)\to H^2_{nr}(X)$ is an isomorphism.
\end{thm}
\begin{proof}
	By Prop.\;\ref{3p7temp}, we have a natural exact sequence
	\[
	0\lra \CH^1(X)/2\overset{\cl^1}{\lra}H^2(X)\lra H^2_{nr}(X)\lra 0\;.
	\]
	This sequence together with Lemma\;\ref{6p5temp} (1) proves the theorem immediately.
\end{proof}

\begin{coro}\label{6p7temp}
  We have $\coker(\eta^2_{\infty})=0$ for any smooth projective quadric of dimension $\ge 1$.
\end{coro}
\begin{proof}
  Combine Lemma\;\ref{4p3temp}, Prop.\;\ref{5p3temp}, Cor.\;\ref{5p7temp} and Thm.\;\ref{6p6temp}.
\end{proof}

\begin{thm}\label{6p8temp}
	If $\dim\vp>4$, then the map
\[
\mu^{1,\,2}: H^1(F,\,1)\otimes\bigl(\CH^1(X)/2\bigr)\to H^1_{Zar}\bigl(X\,,\,\sH^2(2)\bigr)
\] is injective and there are isomorphisms
	\[
	\coker(\mu^{1,\,2})\cong \ker(\eta^3)\quad\text{and}\quad \coker(\eta^3)\cong \ker(\cl^2_X)\;.
	\]
\end{thm}
\begin{proof}
	Note that the map $\nu^{1,\,2}$ factors through $\mu^{1,\,2}$ by construction (cf. \eqref{3p6p4temp}). So we have the following commutative diagram
	\begin{equation}\label{6p8p1temp}
		\xymatrix{
			0 \ar[r] & H^1(F,\,1)\otimes\bigl(\CH^1(X)/2\bigr) \ar[d]_{\mu^{1,\,2}} \ar@{=}[r] & H^1(F,\,1)\otimes\bigl(\CH^1(X)/2\bigr) \ar[r] \ar[d]_{\nu^{1,\,2}} & 0 \ar[r] \ar[d] &  0\\
			0 \ar[r] & H^1_{Zar}\bigl(X\,,\,\sH^2(2)\bigr) \ar[r] & H^3(X,\,2) \ar[r] & M \ar[r] & 0
		}
	\end{equation}
	where the second row is obtained from the exact sequence \eqref{3p8p2temp} and
\[
M:=\ker(H^3_{nr}(X,\,2)\to \CH^2(X)/2)\;.
\]
 Applying the snake lemma to the above diagram and using Lemma\;\ref{6p5temp} (2) (and \eqref{3p8p2temp}), we get an exact sequence
	\begin{equation}\label{6p8p2temp}
\begin{split}
  		0&\lra H^1(F,\,1)\otimes \bigl(\CH^1(X)/2\bigr)\xrightarrow[]{\mu^{1,\,2}} H^1_{Zar}\bigl(X\,,\,\sH^2(2)\bigr)\\
  &\lra H^3(F)\overset{\eta^3}{\lra}H^3_{nr}(X)\lra \ker(\cl^2_X)\lra 0\;.
\end{split}	\end{equation}
	The theorem follows immediately from the above sequence.
\end{proof}

\begin{remark}\label{6p9temp}
	About the map $\mu^{1,\,2}$, the cases not covered by Theorem\;\ref{6p8temp} are:
	
	(a) $\dim\vp=3$; (b) $\dim\vp=4$ and $e_1(\vp)=0$; (c) $\dim\vp=4$ and $e_1(\vp)\neq 0$.
	
	In all these cases, we can still use the diagram \eqref{6p8p1temp} to get an exact sequence
	\[
		0\lra \coker(\mu^{1,\,2})\lra \coker(\nu^{1,\,2})\lra \ker\bigl(H^3_{nr}(X)\to \CH^2(X)/2\bigr)\lra 0
	\]
	and an equality $\ker(\mu^{1,\,2})=\ker(\nu^{1,\,2})$.

To get further information about $\ker(\mu^{1,\,2})$, we need to analyze the right vertical map $\delta$ in \eqref{6p5p1temp}. If $\vp$ is isotropic, then $\CH^1(X)/2\cong\CH^1(\ov{X})/2$ and $\delta$ is an isomorphism. So in this case the exact sequence \eqref{6p8p2temp} remains valid, and we get the same conclusions as in Theorem\;\ref{6p8temp}.
	
	Now we assume $\vp$ is anisotropic. Then we claim
\[
\begin{cases}
 \ker(\mu^{1,\,2})=\ker(\nu^{1,\,2})\cong \Nrd(D^*)/F^{*2}\;,\quad & \text{ in Cases (a) and (b)},\\
  \ker(\mu^{1,\,2})=\ker(\nu^{1,\,2})=0\;,\quad & \text{ in Case (c)}.
\end{cases}
\]
	
	In Case (a), the map $\delta$ in \eqref{6p5p1temp} can be identified with the zero map from $H^1(F,\,1)$ to itself, since the map
	$\CH^1(X)/2\to \CH^1(\ov{X})/2$ is the zero map from $2\Z/4\Z$ to $\Z/2\Z$. Thus, \eqref{6p5p1temp} yields an exact sequence
	\[
		0\lra \ker(\nu^{1,\,2})\lra H^1(F,\,1)\xrightarrow[]{\cup (D)} H^3(F)\lra \coker(\nu^{1,\,2})\lra H^1(F,\,1)\lra 0\,.
	\]	In Case (b), we can get an exact sequence of the same form, because the map $\delta$ can be viewed as the map
	\[
	(0\,,\,\id)\;:\; H^1(F,\,1)\oplus H^1(F,\,1)\lra H^1(F,\,1)\oplus H^1(F,\,1)\;.
	\]This proves our claim in Cases (a) and (b).
	
	In Case (c), the Galois action on $K_1(\ov{F})/2\otimes\CH^1(\ov{X})/2\cong (K_1(\ov{F})/2)^2$ is given by
	\[
	\sigma.(a,\,b)=(\sigma(b),\,\sigma(a))\;,\quad a,\,b\in K_1(\ov{F})/2\;.
	\]
	The map $\delta$ in this case can be identified with the map
	\[
	H^0(F,\,K_1(\ov{F})/2)\lra H^0\bigl(F,\,K_1(\ov{F}/2)\oplus K_1(\ov{F})/2\bigr)
	\]induced from the diagonal map
	\[
	H^0(F,\,K_1(\ov{F})/2)\lra (K_1(\ov{F})/2)^2\;;\quad x\longmapsto (x,\,x)\;.
	\]Therefore, $\delta$ is injective in this case. We conclude that
	$\ker(\mu^{1,\,2})=\ker(\nu^{1,\,2})=0$	in Case (c).
\end{remark}

\begin{thm}\label{6p10temp}
	Assume $\dim\vp>4$. The maps $\mu^{1,\,2}$ and $\eta^3$ are both isomorphisms in each of the following cases:
	
	\begin{enumerate}
		\item $\vp$ is isotropic.
		\item $\vp$ is neither an Albert form nor a Pfister neighbor.
		\item $\dim\vp>8$.
	\end{enumerate}
\end{thm}
\begin{proof}
	We shall use Theorem\;\ref{6p8temp}. In Case (1) the result follows from Prop.\;\ref{3p5temp}. In the other cases we have $\coker(\eta^3)\cong \ker(\cl^2_X)=0$ by Cor.\;\ref{6p2temp} and Prop.\;\ref{6p4temp}. The injectivity of $\eta^3$ is immediate from \cite[Thm.\;5.6]{HLS21}.
\end{proof}

In Theorem\;\ref{6p11temp} below, the result for $\coker(\eta^3)$ is different from its counterpart in characteristic $\neq 2$ (\cite[Thm.\;2 (b)]{Kahn95ArchMath}). The proof given in \cite{Kahn95ArchMath} relies on a description of  $\mathrm{cl}^2_X(h^2-2\ell_1)$ provided in \cite[Prop.\;5.4.6]{Shyevski90}. We do not have a characteristic 2 analog of that result. So we proceed with a different method.

\begin{thm}\label{6p11temp}
	Suppose that $\vp$ is a neighbor of an anisotropic $3$-Pfister form $\Pfi{a,\,b\,;\,c}$.
	
	Then $\ker(\eta^3)$ is generated by $(a)\cup (b)\cup (c]\in H^3(F)$ (so $\ker(\eta^3)\cong \Z/2$), and $\coker(\eta^3)\cong \Z/2$.
\end{thm}
\begin{proof}
	The assertion about $\ker(\eta^3)$ is immediate from \cite[Thm.\;5.6]{HLS21}. To prove the other assertion, we may reduce to the 5-dimensional case thanks to Cor.\;\ref{4p2temp}. We may thus assume $\vp=\dgf{a}\bot \Pfi{b\,;\,c}$. Since there is an injection from $\ker(\eta^3)$ to $\coker(\eta^3)$ by Lemma\;\ref{4p3temp}, it suffices to show $|\coker(\eta^3)|\le 2$.
	
	Let $Y$ be the projective quadric defined by $\Pfi{b\,;\,c}$. It is a smooth hyperplane section in $X$. We have the commutative diagram
	\[
	\begin{CD}
		\CH^2(X)/2 @>i^*>> \CH^2(Y)/2\\
		@V{\cl^2_X}VV @VV{\cl^2_Y}V\\
		H^4(X,\,2) @>>> H^4(Y,\,2)
	\end{CD}
	\]The map $\cl^2_Y$ is injective by Prop.\;\ref{6p4temp}. So the above diagram shows that
	$\ker(\cl^2_X)\subseteq \ker(i^*)$. 	Here
	\[
	\begin{split}
		\CH^2(X)/2&=\CH^2(X)_{\tors}\oplus (\Z/2).h^2=(\Z/2).(h^2-2\ell_1)\oplus(\Z/2).2\ell_1\,,\\
		\CH^2(Y)/2&=(\Z/2).h^2_Y=(\Z/2).2\ell_0\,,
	\end{split}
	\]and the map $i^*$ sends $h^2-2\ell_1$ to $0$ and $2\ell_1$ to $2\ell_0=h^2_Y$. Hence $\ker(i^*)=(\Z/2).(h^2-2\ell_1)=\CH^2(X)_{\tors}$, and by Thm.\;\ref{6p8temp} we have
	\[
	|\coker(\eta^3)|=|\ker(\cl^2_X)|\le |\ker(i^*)|=2
	\]as desired.
\end{proof}

Now we have computed $\ker(\eta^3)$ and $\coker(\eta^3)$ except in the case where $\vp$ is an Albert form. This last case will be treated in \S\,\ref{sec8}.

\section{Unramified Witt groups in characteristic 2}

We need to use residue maps on the Witt group of a discrete valuation field of characteristic 2. We recall some key definitions and facts that will be used in the next section.

Throughout this section, let $K$ be a field extension of $F$ (so $\car(K)=2$) and let $R$ be the valuation ring of a nontrivial discrete valuation $v$ on $K$. Let $\pi\in R$ be a uniformizer and let $k$ be the residue field of $R$.

\medskip

\newpara\label{7p1temp} Let $I_q(R)=W_q(R)$ be the Witt group of nonsingular quadratic spaces over $R$ as defined in \cite[p.18, (I.4.8)]{Baeza78LNM655}. It is naturally a subgroup of $I_q(K)$. For $n\ge 2$, let $I^n_q(R)$ be the subgroup of $I_q(R)$ generated by Pfister forms of the following type:
\[
\langle\!\langle a_1,\cdots, a_{n-1}\,;\,b]]\quad\text{where } a_i\in R^*\,,\,b\in R\,.
\]Put $I^1_q(R)=I_q(R)$. There is a natural homomorphism $I^n_q(R)\to I^n_q(k),\,\vp\mapsto \ov{\vp}$ for each $n\ge 1$.

Following \cite{Arason18}, we define the \emph{tame} (or \emph{tamely ramified}) subgroup of $I_q(K)$ to be the subgroup
$I_q(K)_{tr}:=W(K)\cdot I_q(R)$. For general $n\ge 1$, we put
\[
I^n_q(K)_{tr}:=I^{n-1}(K)\cdot I_q(R)=I^n_q(R)+\langle 1,\,\pi\rangle_{\mathrm{bil}}\cdot I^{n-1}_q(R)\,.
\]
By \cite[Props.\,1.1 and 1.2]{Arason18}, there is a well defined \emph{residue map}
\[
\partial\,:\;I_q(K)_{tr}\lra I_q(k)\,;\quad \vp_0+\pi.\vp_1\longmapsto \ov{\vp}_1\;\text{ for }\,\vp_0,\,\vp_1\in I_q(R)
\]and the sequence
\[
0\lra I_q(R)\lra I_q(K)_{tr}\overset{\partial}{\lra} I_q(k)\lra 0
\]is exact.
For each $n\ge 1$ we have an induced residue map $\partial: I^n_q(K)_{tr}\to I^{n-1}_q(k)$ (with $I^0_q(k)=I^1_q(k)$ by convention), and putting
\[
I^n_q(K)_{nr}:=\ker(\partial: I^n_q(K)_{nr}\to I^{n-1}_q(k))=I^n_q(K)_{tr}\cap I_q(R)\,,
\]we get an exact sequence
\[
0\lra I^n_q(K)_{nr}\lra I_q^n(K)_{tr}\overset{\partial}{\lra} I^{n-1}_q(k)\lra 0\,.
\]

Note that the term ``tame'' and the residue map depend on the discrete valuation $v$ (or the valuation ring $R$).

\

\newpara\label{7p2temp} We use shorthand notation for cohomology functors introduced at the beginning of \S\,\ref{sec5}.

Given $i\in\N$, localization theory in \'etale cohomology theory gives rise to a long exact sequence
\[
\cdots\lra H^{q+1}(R,\,i+1)\lra H^{q+1}(F,\, i+1)\overset{\delta}{\lra} H^{q+2}_{\mathfrak{m}}(R\,,\,i+1)\lra \cdots
\]where $\mathfrak{m}$ denotes the unique closed point of $\Spec(R)$. By \cite[Cor.\;3.4]{Shiho07} we have
\[
  H^{q+2}_{\mathfrak{m}}(R\,,\,i+1)=0\quad\text{ if } q\notin \{i,\,i+1\}\,.
\]Moreover, \cite[Thm.\;3.2]{Shiho07} tells us that
$H^i(k,\,i)\simto H^{i+2}_{\mathfrak{m}}(R\,,\,i+1)$.
So the localization sequence above yields an exact sequence
\[
  \begin{split}
    0&\lra H^{i+1}(R,\,i+1)\lra H^{i+1}(K,\,i+1)\overset{\delta_0}{\lra}H^i(k,\,i)\lra \\
   &\lra H^{i+2}(R)\lra H^{i+2}(K)\overset{\delta_1}{\lra} H^{i+3}_{\mathfrak{m}}(R,\,i+1)\lra H^{i+3}(R,\,i+1) \lra 0\,.
  \end{split}
\]
By the Bloch--Kato--Gabber theorem, the map $\delta_0$ in the above sequence can be identified with the  residue map
$K^M_{i+1}(K)/2\to K^M_i(k)/2$ in Milnor $K$-theory, which is surjective. Hence, we have an exact sequence
\[
0 \lra H^{i+2}(R)\lra H^{i+2}(K)\overset{\delta_1}{\lra} H^{i+3}_{\mathfrak{m}}(R,\,i+1)\lra H^{i+3}(R,\,i+1) \lra 0\,.
\]An element of $H^{i+2}(K)$ is called \emph{unramified} at $v$ if it lies in the subgroup $H^{i+2}(R)=\ker(\delta_1)$.

For each $n\ge 2$, we define the \emph{tame} (or \emph{tamely ramified}) part of $H^n(K)$ to be the subgroup
\[
  H^n_{tr}(K):=\ker\left(H^n(K)\lra H^n(\hat{K})\lra H^n(\hat{K}^{nr})\right)\,,
\]where $\hat{K}$ denotes the $v$-adic completion of $K$ and $\hat{K}^{nr}$ is the maximal unramified extension of $\hat{K}$.
Kato constructed (cf. \cite{Kato82}, \cite[\S\,1]{Ka86}) a \emph{residue map} $\partial^H: H^n_{tr}(K)\to H^{n-1}(k)$ satisfying
\begin{equation}\label{7p2p1temp}
  \partial^H\left((u\pi)\cup (a_1)\cdots (a_{n-2})\cup (b\,]\right)=(\ov{a}_1)\cup \cdots (\ov{a}_{n-2})\cup (\ov{b}\,]\,,\quad u,\,a_i\in R^*\,,\;b\in R\,,
\end{equation}such that the sequence
\[
0\lra H^n(R)\lra H^n(K)_{tr}\overset{\partial^H}{\lra} H^{n-1}(k)\lra 0
\]is exact.
Therefore, an element $\alpha\in H^n(K)$ is unramified if and only if $\alpha\in H^n(K)_{tr}$ and $\partial^H(\alpha)=0$.

\

Using the formula \eqref{7p2p1temp}, one easily proves the following result through calculation.

\begin{prop}\label{7p3temp}
For each $n\ge 2$, we have
\[
e_n(I^n_q(R))\subseteq H^n(R)\,,\;e_n(I^n_q(K)_{tr})\subseteq H^n_{tr}(K)
\]and the following diagram with exact rows is commutative:
\[
  \xymatrix{
 0 \ar[r] & I^n_q(K)_{nr} \ar[d]_{e_n} \ar[rr] && I^n_q(K)_{tr} \ar[d]_{e_n} \ar[rr]^{\partial}  &&  I^{n-1}_q(k) \ar[d]^{e_{n-1}} \ar[r] & 0 \\
 0 \ar[r] & H^{n}(R) \ar[rr] && H^{n}_{tr}(K) \ar[rr]^{\partial^H} && H^{n-1}(k) \ar[r] & 0
  }
\]
\end{prop}

\section{Nontrivial unramified class for Albert quadrics}\label{sec8}

In this section we investigate the case of Albert quadrics and  complete our study of the map $\eta^3$.

\medskip

Recall some standard notation. The hyperbolic plane $\bH$ is the binary quadratic form $(x,\,y)\mapsto xy$. For any nondegenerate form $q$ of dimension $\ge 3$ over $F$, let $F(q)$ denote the function field of the projective quadric defined by $q$.

\begin{lemma}\label{8p1temp}
Let $q$ be an Albert form over $F$ which represents 1, and let $q_1$ be a form over $F(q)$  such that $q_{F(q)}=q_1\bot\bH$.

If $q_1$ represents $1$, then $q$ must be isotropic over $F$.
\end{lemma}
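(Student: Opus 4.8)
The plan is to prove the contrapositive: assuming $q$ is anisotropic over $F$, I would show that $q_1$ does not represent $1$. Since $q$ represents $1$ and is nonsingular of even dimension, write $q\cong\Pfi{e}\perp\mu$ with $\mu$ nonsingular of dimension $4$; as $e_1(q)=0$, necessarily $e_1(\mu)=(e\,]$. Because $q$ is an anisotropic Albert form, $q_{F(q)}$ is isotropic, and since $[q_{F(q)}]\in I^2_q(F(q))$ the Arason--Pfister Hauptsatz gives $\dim(q_{F(q)})_{\mathrm{an}}\in\{0,4\}$ (it is $\ge 4$ if nonzero, and $<6$ by isotropy); the value $0$ is excluded because an anisotropic form split by its own function field is similar to a general Pfister form and $\dim q=6$ is not a power of $2$. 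Hence $q_1=(q_{F(q)})_{\mathrm{an}}$ is anisotropic of dimension $4$, lies in $I^2_q(F(q))$, and is similar to a unique anisotropic $2$-Pfister form $\pi$, say $q_1\cong\lambda\pi$. By roundness of Pfister forms, $q_1$ represents $1$ if and only if $q_1\cong\pi$, i.e. if and only if $q_{F(q)}\cong\pi\perp\bH$; equivalently, the $3$-Pfister form $\pfi{\lambda}_{\mathrm{bil}}\otimes\pi$ is hyperbolic over $F(q)$, that is $(\lambda)\cup(D)_{F(q)}=0$ in $H^3(F(q))$, where $(D)=e_2(q)$ is the Clifford invariant of $q$. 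So the task is to show this class is nonzero when $q$ is anisotropic.

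For the core argument I would pass to the explicit model $F(q)=L\bigl(\wp^{-1}(e+\mathbf m)\bigr)$, where $L=F(\zeta_1,\dots,\zeta_4)$ is purely transcendental and $\mathbf m=\mu(\zeta_1,\dots,\zeta_4)\in L$: thus $q_L$ is anisotropic, $q_E$ (with $E:=F(q)$) is isotropic, the hyperbolic plane of $q_E$ arising from the rational point of $X_q$ is the base change of a binary subspace defined over $L$ and isometric to $\Pfi{e+\mathbf m}$, so $q_L\cong\Pfi{e+\mathbf m}\perp\psi$ with $\psi$ over $L$ and $q_1\cong\psi_E$. Then "$q_1$ represents $1$" becomes a concrete system of two identities over $L$ (decompose the representing vector in the basis $\{1,\wp^{-1}(e+\mathbf m)\}$ of $E/L$), and I would use the characteristic-$2$ residue maps on Witt groups of discrete valuation fields (recalled above, following \cite{Arason18}) to propagate this information down the divisorial valuations of $L$ in the variables $\zeta_i$, tracking the successive residues of $\psi$ and of $e+\mathbf m$, until one produces an isotropic vector for $q$ over $F$ and contradicts anisotropy. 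Equivalently, one argues with the associated Witt, or cohomology, classes.

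I expect this descent to be the main obstacle, and the point where \cite{Arason18} is genuinely needed. The naive hope of directly "specializing the $\zeta_i$ to values in $F$" fails: every discrete valuation on $F(q)$ trivial on $F$ has a centre on the proper quadric $X_q$, hence a residue field over which $q$ is already isotropic, so a dimension count at the residue never produces a contradiction; for the same reason $(\lambda)\cup(D)_{F(q)}$ is unramified on $X_q$ and cannot be detected by any single residue map. One must instead follow the finer data — the successive residues of $\psi$, equivalently of the $3$-Pfister form $\pfi{\lambda}_{\mathrm{bil}}\otimes\pi$ — through a carefully chosen tower of valuations, using the Witt-group residue theory to keep the bookkeeping consistent, and show the obstruction persists down to $F$. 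Verifying that such a tower does the job, rather than the formal set-up, is where the real work lies.
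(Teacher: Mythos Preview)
Your initial reductions are correct: $q_1$ is the anisotropic part, it is similar to a $2$-Pfister form $\pi$, and ``$q_1$ represents $1$'' is equivalent to the vanishing of the symbol $(\lambda)\cup (D)_{F(q)}$ in $H^3(F(q))$. However, the proposal then stops at exactly the hard point. You correctly observe that this class is unramified on $X_q$, so no single residue detects it, and you propose to track ``finer data'' through a tower of valuations --- but you do not specify the tower, do not compute a single residue of $\psi$ or of $\pfi{\lambda}_{\mathrm{bil}}\otimes\pi$, and explicitly concede that ``the real work lies'' in this unperformed verification. As it stands this is a reformulation plus a strategy, not a proof; and it is not clear that the strategy terminates, since at each stage the residue field still makes $q$ isotropic and you have given no invariant that strictly decreases.

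The paper bypasses this entirely with a short trick that you missed. Since $q$ represents $1$ and has trivial Arf invariant, one may write $q=a[1,b]\perp c[1,d]\perp[1,b+d]$; adding an orthogonal $\langle 1\rangle$ and using $[1,b+d]\perp\langle 1\rangle\cong\bH\perp\langle 1\rangle$ gives $q\perp\langle 1\rangle\cong\bH\perp q''$ with $q'':=a[1,b]\perp c[1,d]\perp\langle 1\rangle$ a $5$-dimensional form \emph{defined over $F$}. Witt cancellation over $F(q)$ then yields $q_1\perp\langle 1\rangle\cong q''_{F(q)}$, so ``$q_1$ represents $1$'' becomes ``$q''_{F(q)}$ is isotropic''. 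Now one is reduced to a question about a form over the base field, and known isotropy criteria apply: by \cite[Thm.\;1.2\,(2)]{Laghribi02Israel} an anisotropic $5$-dimensional $q''$ that becomes isotropic over $F(q)$ must be a $3$-Pfister neighbor, and then \cite[Prop.\;3.4]{Laghribi02Israel} forces $q$ itself to be a neighbor of the same $3$-Pfister form --- impossible for an anisotropic Albert form. The whole argument is a paragraph; no residue computations or explicit function-field models are needed. The key idea you are missing is this ``add $\langle 1\rangle$ and cancel $\bH$'' manoeuvre, which converts a statement about $q_1$ over $F(q)$ into a statement about a fixed form over $F$.
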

\begin{proof}
  We may write $q=a[1,\,b]\bot c[1,\,d]\bot [1,\,b+d]$, where $a,\,c\in F^*$ and $b,\,d\in F$. Set $q'=q\bot \langle 1\rangle$ and $q''=a[1,\,b]\bot c[1,\,d]\bot \langle 1\rangle$. Since
  \[
  [1,\,b+d]\bot \langle 1\rangle\cong [0,\,b+d]\bot \langle 1\rangle\cong \bH\bot \langle 1\rangle
  \] we have $q'\cong \bH\bot q''$ over $F$. Now
  \[
  \bH\bot q_1\bot \langle 1\rangle=q_{F(q)}\bot \langle 1\rangle=q'_{F(q)}\cong \bH\bot q''_{F(q)}\,.
  \]By Witt cancellation, $q_1\bot \langle 1\rangle\cong q''_{F(q)}$. The assumption that $q_1$ represents 1 implies that $q''_{F(q)}$ is isotropic.

 Assume that  $q$ is anisotropic. Then $q''$ is also anisotropic because it is a subform of $q$. By \cite[Thm.\;1.2 (2)]{Laghribi02Israel}, $q''$ must be a  neighbor of a 3-Pfister form $\pi$. The isotropy of $q''_{F(q)}$ shows that $\pi_{F(q)}$ is hyperbolic. Thus, by \cite[Prop.\;3.4]{Laghribi02Israel}, $q$ must be a neighbor of $\pi$. But this is absurd, because an anisotropic Albert form cannot be a Pfister neighbor (see e.g. \cite[Lemma\;3.4 (1)]{HLS21}).
\end{proof}

Now assume $X$ is defined by  an anisotropic Albert form $\vp$ over $F$.  Let $K=F(X)$ be its function field. We shall normalize $\vp$ so that it represents 1. Thus
\[
\vp=[1,\,b+d]\bot a.[1,\,b]\bot c.[1,\,d]\quad\text{ for some } a,\,b,\,c,\,d\in F^*\,.
\]
We have $\vp=\Pfi{a\,;b}-\Pfi{c\,;d}\in I^2_q(F)$.

Let $\vp_1/K$ be the anisotropic part of $\vp_{K}$. Since $\vp$ is anisotropic of dimension $<8$, the Hauptsatz \cite[(23.7)]{EKM08} implies $\vp\notin I^3_q(F)$. This means that $\vp$ has degree 2 (by \cite[(40.10)]{EKM08}). By \cite[(25.7)]{EKM08}, $\vp_1$ is similar to a 2-Pfister form (and the height of $\vp$ is 2). Let $\tau/K$ be the quadratic 2-Pfister form similar to $\vp_1$, say $\vp_1=f.\tau$, where $f\in K^*$. We have $\tau-\vp_1=\langle 1,\,f\rangle_{\mathrm{bil}}\cdot\tau\in I^3_q(K)$, so that the cohomology class $e_3(\tau-\vp_1)\in H^3(K)$ can be defined.

We claim that $e_3(\tau-\vp_1)$ lies in the unramified cohomology group $H^3_{nr}(X)$. In fact, we can prove the following:

\begin{prop}\label{8p2temp}
  For every discrete valuation $v$ of $K$ that is trivial on $F$, $e_3(\tau-\vp_1)$ is unramified at $v$.
\end{prop}
\begin{proof}
Let $R$ be the valuation ring of $v$ in $K=F(X)$.
 By the commutative diagram in Prop.\;\ref{7p3temp}, it is sufficient to show
 \[
 \tau-\vp_1\in I_q^3(K)_{nr}=I_q(R)\cap I^3_q(K)_{tr}\,.
 \]

 First note that the Witt class of $\vp_1$ is equal to that of $\vp_{K}$. Hence
\[
\vp_1=\Pfi{a\,;b}-\Pfi{c\,;d}\;\in\; \im(I^2_q(F)\lra I_q^2(K))\;\subseteq I^2_q(R)\,.
\]
If $f$ is a unit for $v$, then $\tau-\vp_1=-\langle 1,\,f^{-1}\rangle_{\mathrm{bil}}\cdot\vp_1\in I^3_q(R)\;\subseteq I^3_q(K)_{nr}$.
We may thus assume $\pi:=f^{-1}$ is a uniformizer for $v$. Then
\[
\tau-\vp_1=-\langle 1,\,f^{-1}\rangle_{\mathrm{bil}}\cdot\vp_1=-\langle 1,\,\pi\rangle_{\mathrm{bil}}\cdot\vp_1\;\in\; \langle 1,\,\pi\rangle_{\mathrm{bil}}\cdot I^2_q(R)\;\subseteq I^3_q(K)_{tr}\,.
\]It remains to prove $\tau-\vp_1\in I_q(R)$.  We already know $\vp_1\in I_q^2(R)\subseteq I_q(R)$. So it suffices to show $\tau\in I_q(R)$. This is equivalent to $\tau_{\hat{K}}\in I_q(\hat{R})$, where $\hat{R}$ denotes the completion of $R$ and $\hat{K}$ is the fraction field of $\hat{R}$ (see e.g. \cite[p.106]{Arason18}).

Indeed,
\[
e_2(\vp_1)=e_2(\Pfi{a\,;b}-\Pfi{c\,;d})\;\in\;\im(H^2(F)\lra H^2(K))\;\subseteq H^2(R)\,.
\]
Therefore, the cohomology class $e_2(\vp_1)$ is unramified at $v$. Since $e_2(\tau)=e_2(\vp_1)$ by \eqref{2p1p1temp}, $e_2(\tau)$ is also unramified at $v$. As a 2-Pfister form, $\tau$ is the reduced norm form of a quaternion division $K$-algebra $D$. The cohomology class $e_2(\tau)$ is the Brauer class of $D$. Thus, the fact that $e_2(\tau)$ is unramified means that the quaternion algebra $D$ is unramified. Thus, $D_{\hat{K}}\cong (\al,\,\beta]$ for some $\al\in\hat{R}^*,\,\beta\in \hat{R}$. It follows that $\tau_{\hat{K}}\cong \Pfi{\al\,;\beta}$ (\cite[(12.5)]{EKM08}), proving that the Witt class of $\tau_{\hat{K}}$ lies in $I^2_q(\hat{R})$. In particular, $\tau_{\hat{R}}\in I_q(\hat{R})$ as desired.
\end{proof}

\begin{prop}\label{8p3temp}
  With notation as above,  $e_3(\tau-\vp_1)\notin\im\bigl(\eta^3: H^3(F)\to H^3_{nr}(X)\bigr)$.
\end{prop}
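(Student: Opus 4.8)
Throughout write $K=F(X)$ and keep the notation of $\eqref{8p2temp}$, so that $\vp_1=f.\tau$ with $\tau$ the reduced norm form of the quaternion division $K$-algebra $D$, and $(D)=e_2(\tau)=e_2(\vp)_K\in H^2(K)$.

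\noindent\textbf{Step 1: identify the class and show it is nonzero.} From $\eqref{8p2temp}$ we have $\tau-\vp_1=\langle 1,f\rangle_{\mathrm{bil}}\cdot\tau$, which (for $\tau\cong\Pfi{\alpha_0;\beta_0}$) is the $3$-Pfister form $\Pfi{f,\alpha_0;\beta_0}$ up to Witt equivalence; hence
\[
e_3(\tau-\vp_1)=(f)\cup e_2(\tau)=(f)\cup (D)\quad\text{in }H^3(K)\,.
\]
If $(f)\cup(D)=0$, then $f\in\Nrd(D^*)$ by \cite[p.\,94, Thm.\,6]{Gille00Ktheory} (cf. Remark\,\ref{Un3p11} and Cor.\,\ref{Un3p15}), so $\vp_1=f.\tau\cong\tau$ represents $1$; applying Lemma\,\ref{8p1temp} to $q=\vp$ (which represents $1$) would make $\vp$ isotropic, a contradiction. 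Thus $e_3(\tau-\vp_1)=(f)\cup(D)\neq0$, and it remains to prove that this class does not lie in $\im(\eta^3)$.

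\noindent\textbf{Step 2: a splitting-field argument.} Suppose for contradiction that $e_3(\tau-\vp_1)=\eta^3(\alpha)=\alpha_K$ for some $\alpha\in H^3(F)$. Let $A$ be the biquaternion algebra with $e_2(\vp)=(A)$ and put $N:=F(\mathrm{SB}(A))$. Over $N$ the algebra $A$ splits, so $e_2(\vp)_N=0$; since $\vp$ is an Albert form (so $e_1(\vp)=0$) this forces $\vp_N\in I^3_q(N)$, and as $\dim\vp_N=6<8$ the Arason--Pfister Hauptsatz (\cite[(23.7)]{EKM08}) gives that $\vp_N$ is hyperbolic. Then $X_N$ is an isotropic, hence $N$-rational, quadric, so $M:=N(X)$ is purely transcendental over $N$ and $H^3(N)\hookrightarrow H^3(M)$. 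As $K\subseteq M$ we get $\alpha_M=e_3(\tau-\vp_1)_M=(f)\cup e_2(\vp)_M=0$, whence $\alpha_N=0$. Therefore $\alpha\in\ker\!\big(H^3(F)\to H^3(F(\mathrm{SB}(A)))\big)$, and by the description of this kernel for a biquaternion algebra (the characteristic-$2$ counterpart of the results of \cite{Peyre95ProcSymPureMath58} on $H^3$ of Severi--Brauer varieties), it equals $e_2(\vp)\cup H^1(F,1)$. Hence $\alpha=e_2(\vp)\cup(g)$ for some $g\in F^*$.

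\noindent\textbf{Step 3: reduction to a statement on the leading form.} Restricting $\alpha=e_2(\vp)\cup(g)$ to $K$ and comparing with Step 1, we get $(D)\cup(g)=(D)\cup(f)$, so $(D)\cup(fg^{-1})=0$ and therefore $fg^{-1}\in\Nrd(D^*)=G(\tau)$. Consequently
\[
\vp_1=f.\tau\;\cong\;g.\tau\,,\qquad g\in F^*\,.
\]
So the proposition is reduced to showing that this relation is impossible, i.e. \emph{the anisotropic leading form $\vp_1$ of an anisotropic Albert form over $F$ cannot become similar over $F(\vp)$, via a similarity factor coming from $F^*$, to a quadratic $2$-Pfister form.}

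\noindent\textbf{Step 4: the obstruction.} This last step is the technical heart. Since $g^{-1}.\vp$ is again an anisotropic Albert form with the same quadric $X$ (so $F(g^{-1}.\vp)=K$), the anisotropic part of $(g^{-1}.\vp)_K$ equals $g^{-1}.\vp_1\cong\tau$. Choosing $e\in F^*$ represented by $g^{-1}.\vp$, the form $e^{-1}g^{-1}.\vp$ is an anisotropic Albert form representing $1$ whose first higher anisotropic form is $e^{-1}.\tau$; however, Lemma\,\ref{8p1temp} applies to it only when $e^{-1}.\tau$ represents $1$, i.e. when $e\in\Nrd(D^*)$, which need not hold a priori. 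The plan is to remove this restriction by strengthening the argument of Lemma\,\ref{8p1temp}: combining the isotropy criteria \cite[Thm.\,1.2, Prop.\,3.4]{Laghribi02Israel} with the residue maps on Witt groups of discrete valuation fields of characteristic $2$ recalled in $\eqref{7p1temp}$ and the compatibility \eqref{Uneq1p18p18} -- applied at a discrete valuation of $K/F$ (or of a suitable auxiliary function field) at which $g$ ramifies while $\vp$ has good reduction -- one expects to be forced into the conclusion that $\vp$ is a Pfister neighbour, contradicting the fact that an anisotropic Albert form is never a Pfister neighbour (\cite[Lemma\,3.4]{HLS21}). Equivalently, the content of this step is that the similarity class of $\vp_1$ in $K^*/\Nrd(D^*)$ is never the image of an element of $F^*$, and it is precisely this Witt-residue computation for which the machinery of Section\,\ref{sec3}--Section 7 (especially \cite{Arason18}) was set up; I expect it to be by far the hardest part of the proof, the earlier steps being essentially formal.
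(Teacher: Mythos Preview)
Your Steps 1--3 set up an interesting alternative route, but the proposal is \emph{not complete}: Step~4 is a sketch rather than an argument. You yourself say ``one expects to be forced into the conclusion'' and that this ``is by far the hardest part''. As it stands you have reduced the statement to: \emph{the leading form $\vp_1$ of an anisotropic Albert form cannot be $K$-similar to a $2$-Pfister form via a factor from $F^*$}. You then propose to attack this with residues at a valuation where $g$ ramifies---but $g\in F^*$ is a constant, so it is a unit at every valuation of $K=F(X)$ trivial on $F$, and there is no such ramification to exploit. The Arason residue machinery of \S7 gives no leverage here, and Lemma\;\ref{8p1temp} really does need the leading form to represent $1$. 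So Step~4 is not just missing details: the suggested mechanism does not work.

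There is also a softer gap in Step~2: you invoke a ``characteristic-$2$ counterpart'' of Peyre's description of $\ker\bigl(H^3(F)\to H^3(F(\mathrm{SB}(A)))\bigr)$ for a biquaternion algebra $A$. This is not proved in the paper (only the quaternion case is, in \S\ref{sec5}), and you would need to supply it.

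The paper's argument sidesteps both difficulties with a short induction on symbol length. Write any preimage $\beta\in H^3(F)$ as $e_3(\pi)+\delta$ with $\pi$ a $3$-Pfister form and $\delta$ of smaller symbol length, and pass to $E=F(\pi)$. The key point---replacing your unproven kernel lemma---is Merkurjev's index reduction theorem (\cite[(30.9)]{EKM08}), valid in all characteristics: the biquaternion algebra attached to $\vp$ does not split over the function field of a $3$-Pfister quadric, so $\vp_E$ stays anisotropic. One then applies the induction hypothesis over $E$. The base case $n=1$ uses Kato's theorem that $3$-Pfister forms with equal $e_3$ are isometric, whence $(\vp_1)_{E(X)}\cong\tau_{E(X)}$ represents $1$, and Lemma\;\ref{8p1temp} applies directly---no strengthening needed.
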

\begin{proof}
Since $e_3: I^3_q(F)\to H^3(F)$ is a surjection and $I^3_q(F)$ is additively generated by 3-Pfister forms, every element of $H^3(F)$ is a sum of finitely many symbols (by a \emph{symbol} in $H^3(F)$ we mean the class $e_3(\pi)$ of a 3-Pfister form $\pi$). We may define the \emph{symbol length} of an element $\beta\in H^3(F)$ to be the smallest positive integer $n$ such that $\beta$ is a sum of $n$ symbols.

We use induction to show:

\emph{For every $n\ge 1$, there is no element $\beta\in H^3(F)$ of symbol length $n$ such that $\beta_{F(X)}=e_3(\tau-\vp_1)$.}

Assume the contrary. Let $\beta\in H^3(F)$ be an element of symbol length $n$ such that $\beta_{F(X)}=e_3(\tau-\vp_1)$.

First assume $n=1$, i.e., $\beta=e_3(\pi)$ for some 3-Pfister form $\pi$ over $F$. Since $\tau\bot-\vp_1$ and $\pi_{F(X)}$ are both 3-Pfister forms, from $e_3(\tau-\vp_1)=e_3(\pi_{F(X)})$ we can conclude that $\tau\bot-\vp_1\cong \pi_{F(X)}$, by \cite[p.237, Prop.\;3]{Kato82}. Letting $E=F(\pi)$,  we get that $\tau-\vp_1=0$ in $I^3_q(E(X))$. Hence $(\vp_1)_{E(X)}=\tau_{E(X)}$ is a 2-Pfister form. In particular, $(\vp_1)_{E(X)}$ represents 1. By Lemma\;\ref{8p1temp}, $\vp_E=\vp_{F(\pi)}$ must be isotropic. But this contradicts an index reduction theorem of Merkurjev (cf. \cite[(30.9)]{EKM08}).

Now consider the general case and write $\beta=\pi+\delta$, where $\pi\in H^3(F)$ is a symbol and $\delta\in H^3(F)$ has symbol length $n-1$. Consider again the function field $E=F(\pi)$.  As before $\vp_E=\vp_{F(\pi)}$ is anisotropic by Merkurjev's index reduction theorem. But
\[
\delta_{E(X)}=\beta_{E(X)}=e_3(\tau_{E(X)}-(\vp_1)_{E(X)})\;\in \;H^3(E(X))\,.
\]This contradicts the induction hypothesis.
\end{proof}

\begin{thm}\label{8p4temp}
  Let $X$ be the projective quadric defined by an anisotropic Albert form over $F$.

  Then the map $\eta^3: H^3(F)\to H^3_{nr}(X)$ is injective and $\coker(\eta^3)\cong \Z/2$.
\end{thm}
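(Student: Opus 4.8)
The substantive assertion is $\coker(\eta^3)\cong\Z/2$; injectivity of $\eta^3$ I would quote at the end. For the cokernel the plan is to reduce to the cycle class map: since $\dim\vp=6>4$, Theorem\;\ref{Un3p25} applies and supplies a natural isomorphism $\coker(\eta^3)\cong\ker(\cl^2_X)$, where $\cl^2_X\colon\CH^2(X)/2\to H^4(X,2)$ is the mod $2$ cycle class map. So it suffices to show that $\ker(\cl^2_X)$ has order exactly $2$.

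The upper bound $|\ker(\cl^2_X)|\le 2$ I would obtain by base change to $\ov{X}$. Let $\xi^2\colon\CH^2(X)/2\to H^0(F,\CH^2(\ov{X})/2)$ be the canonical map of Lemma\;\ref{Un3p18}; since $\vp$ is an anisotropic Albert form, Lemma\;\ref{Un3p18}(3)(a) gives $\ker(\xi^2)\cong\Z/2$. Now $\CH^2(\ov{X})$ is torsion-free, so $H^0(F,\CH^2(\ov{X})/2)$ is a subgroup of $\CH^2(\ov{X})/2$, and $\cl^2_{\ov X}\colon\CH^2(\ov{X})/2\to H^4(\ov{X},2)$ is injective by Proposition\;\ref{Un3p8} (as $\ov{F}$ is separably closed, $H^3(\ov{F},\Z/2(2))=0$). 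Functoriality of cycle class maps under $\ov{X}\to X$ — as already used in the proof of Corollary\;\ref{Un3p19} — then shows that $\cl^2_X(z)=0$ forces $\cl^2_{\ov X}(\xi^2(z))=0$, hence $\xi^2(z)=0$. Therefore $\ker(\cl^2_X)\subseteq\ker(\xi^2)$, so $|\coker(\eta^3)|=|\ker(\cl^2_X)|\le 2$.

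For the reverse inequality I would invoke the constructions of this section: the class $\alpha:=e_3(\tau-\vp_1)$ introduced in \ref{8p2temp} lies in $H^3_{nr}(X)$, while Proposition\;\ref{8p3temp} shows $\alpha\notin\im(\eta^3)$, so $\coker(\eta^3)\neq 0$. Combined with the upper bound, this gives $\coker(\eta^3)\cong\Z/2$. It remains to record the injectivity of $\eta^3$; this follows from \cite[Thm.\;5.6]{HLS21}, exactly as in the proof of Theorem\;\ref{Un3p28}, since an anisotropic Albert form is not a Pfister neighbor (see e.g.\;\cite[Lemma\;3.4]{HLS21}) and hence not similar to a subform of any $3$-fold Pfister form, so that the description of $\ker(\eta^3)$ in \emph{loc.\,cit.} reduces to $\{0\}$.

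The genuinely hard step of the whole argument — already carried out in Proposition\;\ref{8p3temp} — is the construction of the nontrivial unramified class together with the proof, via a symbol-length induction resting on Merkurjev's index reduction theorem and on the uniqueness of $e_3$ on $3$-fold Pfister forms, that it escapes the image of $\eta^3$. Granting that input, the only point requiring care here is the comparison with $\ov{X}$ in the upper bound; the rest is routine.
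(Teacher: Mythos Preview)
Your proof is correct and follows essentially the same approach as the paper: injectivity via \cite[Thm.\;5.6]{HLS21}, nontriviality of $\coker(\eta^3)$ from Proposition\;\ref{8p3temp}, and the upper bound $|\coker(\eta^3)|\le 2$ via the isomorphism $\coker(\eta^3)\cong\ker(\cl^2_X)$ from Theorem\;\ref{Un3p25} together with the inclusion $\ker(\cl^2_X)\subseteq\ker(\xi^2)\cong\Z/2$ obtained by comparison with $\ov{X}$. Your write-up is in fact slightly more careful than the paper's (you correctly route $\xi^2$ through $H^0(F,\CH^2(\ov{X})/2)$ and spell out why the Albert case forces $\ker(\eta^3)=0$).
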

\begin{proof}
  The injectivity of $\eta^3$ follows from \cite[Thm.\;5.6]{HLS21}. We have seen in Prop.\;\ref{8p3temp} that $\coker(\eta^3)\neq 0$. It remains to show $|\coker(\eta^3)|\le 2$.

  To this end, we use the isomorphism $\coker(\eta^2)\cong \ker(\cl^2_X)$ obtained in Thm.\;\ref{6p8temp}. By functoriality and the injectivity of $\cl^2_{\ov{X}}$, it follows that
  $\ker(\cl^2_X)$ is contained in the kernel of the restriction map $\xi^2: \CH^2(X)/2\to \CH^2(\ov{X})/2$. By Lemma\;\ref{6p1temp}, we have $\ker(\xi^2)\cong \Z/2$. This completes the proof.
\end{proof}

Together with Prop.\;\ref{4p4temp}, the following corollary also describes the kernel of the cycle class map
\[
\cl^2_{\infty}\,:\; \CH^2(X)\otimes(\Q_2/\Z_2)\lra H^4\bigl(X,\,\Q_2/\Z_2(2)\bigr)\;.
\]

\begin{coro}\label{8p5temp}
Let $X$ be the projective quadric defined by a nondegenerate quadratic form $\vp$ of dimension $\ge 3$.

  Then the map $\eta^3_{\infty}: H^3\bigl(F,\,\Q_2/\Z_2(2)\bigr)\to H^3_{nr}\bigl(X,\,\Q_2/\Z_2(2)\bigr)$ is surjective unless $\vp$ is an anisotropic Albert form. In the latter case $\coker(\eta^3_{\infty})\cong \Z/2$.
\end{coro}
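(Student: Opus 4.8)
The plan is to deduce Corollary~\ref{8p5temp} from the work already assembled: Lemma~\ref{4p3new} reduces everything to statements about $\eta^3$ and $\coker(\eta^3_\infty)$, and Proposition~\ref{4p4new} identifies $\coker(\eta^3_\infty)$ with $\ker(\cl^2_\infty)$, so that the two sentences of the corollary are really about the cycle class map $\cl^2_\infty$. First I would recall that $\coker(\eta^3_\infty)$ is $2$-torsion (Prop.~\ref{4p1temp}~(2)), and that by Lemma~\ref{4p3new} the natural surjection $\coker(\eta^3_1)\twoheadrightarrow\coker(\eta^3_\infty)$ has kernel $\ker(\eta^3_1)$; hence $\coker(\eta^3_\infty)=0$ exactly when $\coker(\eta^3)=\ker(\eta^3)$, i.e. the map $\ker(\eta^3)\to\coker(\eta^3)$ of Lemma~\ref{4p3new} is an isomorphism.

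Next I would run through the classification of $\vp$ case by case, invoking the results proved earlier. If $\vp$ is isotropic, $\eta^3_\infty$ is an isomorphism by Prop.~\ref{4p1temp}~(1). If $3\le\dim\vp\le4$, then $\coker(\eta^3_\infty)=0$: for $\dim\vp=3$ the quadric is a conic and $\CH^2(X)=0$, while for $\dim\vp=4$ one reduces (as in Cor.~\ref{Un3p17} and Remark~\ref{5p8new}) to the $2$-Pfister case and applies Cor.~\ref{Un3p16} to pass to conics, where $\coker(\eta^3_\infty)=0$ by Remark~\ref{5p8new}. For $\dim\vp>4$ and $\vp$ anisotropic, Thm.~\ref{Un3p25} gives $\coker(\eta^3)\cong\ker(\cl^2_X)$, and then: in the non-Albert, non-Pfister-neighbor cases and for $\dim\vp>8$ we have $\ker(\cl^2_X)=0$ by Cor.~\ref{Un3p19} and Prop.~\ref{Un3p22} (so $\eta^3$, hence $\eta^3_\infty$, is onto — this is Thm.~\ref{Un3p28}); for a Pfister neighbor of a $3$-Pfister form, $|\coker(\eta^3)|\le2$ but $\ker(\eta^3)\cong\Z/2$ as well (Thm.~\ref{Un3p29}), so the isomorphism $\ker(\eta^3)\xrightarrow{\sim}\coker(\eta^3)$ of Lemma~\ref{4p3new} forces $\coker(\eta^3_\infty)=0$. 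The one remaining case is $\vp$ an anisotropic Albert form, where Thm.~\ref{8p4temp} gives $\coker(\eta^3)\cong\Z/2$ while $\ker(\eta^3)=0$, so Lemma~\ref{4p3new} yields $\coker(\eta^3_\infty)\cong\Z/2$.

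Assembling these, $\coker(\eta^3_\infty)=0$ in every case except the anisotropic Albert one, which is exactly the statement; moreover $\ker(\cl^2_\infty)\cong\coker(\eta^3_\infty)$ by Prop.~\ref{4p4new}, giving the parenthetical remark about the cycle class map.

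The main subtlety is not in the Albert case itself — that is settled by Thm.~\ref{8p4temp} — but in making sure the Pfister-neighbor cases genuinely contribute nothing to $\coker(\eta^3_\infty)$: there $\coker(\eta^3)$ is nonzero, so one must use that $\ker(\eta^3)$ is \emph{equally} nonzero and that the comparison map of Lemma~\ref{4p3new} is an \emph{isomorphism} rather than merely injective. This is why the precise identification $\ker(\eta^3)=\{0,(a)\cup(b)\cup(c]\}$ from \cite[Thm.~5.6]{HLS21} (quoted in Thm.~\ref{Un3p29}) is needed, together with the bound $|\coker(\eta^3)|\le2$ from the same theorem; the two $\Z/2$'s then match by the exactness in Lemma~\ref{4p3new}. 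Everything else is bookkeeping across the case list.
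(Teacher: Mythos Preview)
Your proposal is correct and follows essentially the same approach as the paper: reduce via Lemma~\ref{4p3new} to the computations of $\ker(\eta^3)$ and $\coker(\eta^3)$, invoke Remark~\ref{5p8new} for $\dim\vp\le 4$, and use Theorems~\ref{Un3p28}, \ref{Un3p29}, \ref{8p4temp} for $\dim\vp>4$. Your writeup simply unpacks the case analysis in more detail than the paper's two-line proof, and correctly isolates the one point requiring care (that in the Pfister-neighbor case the injective map $\ker(\eta^3)\hookrightarrow\coker(\eta^3)$ between two copies of $\Z/2$ is forced to be an isomorphism).
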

\begin{proof}
If $\dim\vp\le 4$, this has been discussed in Remark\;\ref{5p8temp}. When $\dim\vp>4$, we have already computed $\ker(\eta^3)$ and $\coker(\eta^3)$ in Theorems\;\ref{6p10temp}, \ref{6p11temp} and \ref{8p4temp}. So it suffices to apply Lemma\;\ref{4p3temp}.
\end{proof}

\medskip

\noindent \emph{Acknowledgements.} We are indebted to the referee for carefully reading the manuscript and giving many helpful suggestions. We thank Yang Cao, Ahmed Laghribi, Zhengyao Wu and Yigeng Zhao for helpful discussions. Peng Sun is supported by the National Key R\&D Program of China Grant No.\,2021YFA1001400 and the Fundamental Research Funds for the Central Universities. Yong Hu  is supported by a grant from the National Natural Science Foundation of China (Project No.\,11801260) and the Guangdong Basic and Applied Basic Research Foundation (No.\,2021A1515010396).

\addcontentsline{toc}{section}{\textbf{References}}

\bibliographystyle{alpha}

\bibliography{Unramified}

Contact information of the authors:

\

Yong HU

\medskip

Department of Mathematics

Southern University of Science and Technology


Shenzhen 518055, China


Email: huy@sustech.edu.cn

\

Peng SUN
\medskip

School of Mathematics

Hunan University

Changsha 410082, China


Email: sunpeng@hnu.edu.cn

\medskip

\end{document}